\theoremstyle{plain} 
\newtheorem{theorem}{\indent\sc Theorem}[section]
\newtheorem{lemma}[theorem]{\indent\sc Lemma}
\newtheorem{corollary}[theorem]{\indent\sc Corollary}
\newtheorem{proposition}[theorem]{\indent\sc Proposition}
\theoremstyle{definition} 
\newtheorem{remark}[theorem]{\indent\sc Remark}
\newtheorem{example}[theorem]{\indent\sc Example}
\newcommand\on{\operatorname}
\renewcommand\div{\on{div}}
\newcommand\grad{\on{grad}}
\newcommand\Hess{\on{Hess}}
\newcommand\Ric{\on{Ric}}
\newcommand\scal{\on{scal}}
\newcommand\vol{\on{vol(M)}}
\title{Gradient solitons on statistical manifolds}
\author{Adara M. Blaga and Bang-Yen Chen}
\date{}
\begin{document}

\maketitle

\markboth{{\small\it {\hspace{4cm} Gradient solitons on statistical manifolds}}}{\small\it{Gradient solitons on statistical manifolds
\hspace{4cm}}}

\footnote{ 
2020 \textit{Mathematics Subject Classification}.
35C08, 35Q51, 53B05.
}
\footnote{ 
\textit{Key words and phrases}.
Affine connections, statistical structures, gradient solitons.
}

\begin{abstract}
We provide necessary and sufficient conditions for some par\-tic\-u\-lar couples $(g,\nabla)$ of pseudo-Riemannian metrics and affine con\-nec\-tions to be statistical structures if we have gradient almost Einstein, almost Ricci, almost Yamabe solitons, or a more general type of solitons on the manifold. In particular cases, we establish a formula for the volume of the manifold and give a lower and an upper bound for the norm of the Ricci curvature tensor field.
\end{abstract}

\bigskip

\section{Introduction}

Information geometry was firstly studied by Amari \cite{a85} treating the properties of the geometrical structures, such as Riemannian metrics and affine con\-nec\-tions, that naturally arise on a space of probability distributions. In this way, statistical structures
provide a link between information geometry and affine geometry. Such a Riemannian metric is the Fisher information metric defined on the manifold of probability distributions \cite{t}. According to Chentson's theorem, up to rescaling, the Fisher information metric on statistical models is the only Riemannian metric that is invariant under sufficient statistics \nolinebreak \cite{am}.

In differential geometry, a \textit{statistical structure} on a smooth manifold $M$ con\-sists of a pseudo-Riemannian metric $g$ and a torsion-free affine connection $\nabla$ such that $\nabla g$ is a Codazzi tensor field. For such a pair $(g,\nabla)$, the dual connection $\nabla^*$ of $\nabla$ with respect to $g$ is uniquely defined by $$X(g(Y,Z))=g(\nabla_XY,Z)+g(Y,\nabla^*_XZ),$$ for $X$, $Y$, $Z\in \mathfrak{X}(M)$. The pair $(\nabla,\nabla^*)$ is said to be a dualistic structure which plays an important role in statistics.

A particular statistical structure when the curvature tensor field of $\nabla$ vanishes is the \textit{Hessian structure} \cite{h}. In this case, also the curvature tensor field of the dual connection $\nabla^*$ vanishes and the manifold is called \textit{dually flat}.

\bigskip

We extend the notion of statistical structure in the following ways:

i) if $h$ is a symmetric $(0,2)$-tensor field and $\nabla$ is a torsion-free affine con\-nec\-tion, we call $(h,\nabla)$ a \textit{nearly statistical structure} on $M$ if $$({\nabla}_Xh)(Y,Z)=({\nabla}_Yh)(X,Z),$$ for any $X$, $Y$, $Z\in \mathfrak{X}(M)$;

ii) if $h$ is a $(0,2)$-tensor field and $\nabla$ is an affine connection, we call $(h,\nabla)$ a \textit{quasi-statistical structure} on $M$
if $d^{\nabla}h=0$  \cite{ma}, where $d^{\nabla}$ is defined by
$$(d^{\nabla}h)(X,Y,Z):=({\nabla}_Xh)(Y,Z)-({\nabla}_Yh)(X,Z)+h(T^{\nabla}(X,Y),Z),$$ for any $X$, $Y$, $Z\in \mathfrak{X}(M)$.

\bigskip

Regarded as stationary solutions of a geometric flow, the notion of \textit{solitons} can be generalized in some very natural ways. One question is that {\it if some of these generalizations are coming from certain particular flows, what geometrical and topological properties of the manifold shall reveal?}

\bigskip

In the context of statistical geometry, we will investigate in this article the consequences of the existence of different kind of solitons; such as Ricci, Einstein, Yamabe or a more general type defined by an affine connection, with a special view towards curvature. By means of an arbitrary $1$-form, we consider a statistical structure, which is equiaffine if the $1$-form is exact, and study some ge\-o\-met\-ri\-cal and topological properties of the solitons defined by it. Precisely, we provide a lower and an upper bound for the Ricci curvature tensor's norm, and in the compact case, using the classical Bochner formula, we determine a relation for the volume of the manifold. It is known that the Ricci tensor is the component of the curvature tensor of spacetime, related to the matter content of the universe via Einstein's field equation, its lower bounds allow us to deduce global geometrical properties of the manifold.

\section{Solitons and statistical structures}

Consider a pseudo-Riemannian manifold $(M,g)$ and let $\nabla^g$ be the Levi-Civita connection of $g$. We denote by $Q$ the Ricci operator defined by $g(QX,Y):=\Ric(X,Y)$, where $\Ric$ is the Ricci tensor of $(M,g)$.
If the Ricci tensor is non-degenerate, then it is a pseudo-Riemannian metric and we denote by $\nabla^{\Ric}$ the Levi-Civita connection of $\Ric$.

From Koszul's formula, we deduce:
$$2g(\nabla^{\Ric}_XY-\nabla^g_XY,QZ)=g((\nabla^g_YQ)X,Z)+g((\nabla^g_XQ)Y,Z)-g((\nabla^g_ZQ)X,Y),$$
for any $X$, $Y$, $Z\in \mathfrak{X}(M)$.

\medskip
Remark the following facts \cite{b}:

\vskip.06in
i) $(\Ric,\nabla^g)$ is a statistical structure if and only if $(\nabla^g_XQ)Y=(\nabla^g_YQ)X$, for any $X$, $Y\in \mathfrak{X}(M)$.
Moreover, if the Ricci operator $Q$ is a Codazzi tensor, then $QT=\frac{1}{2}\nabla^gQ$, where $T:=\nabla^{\Ric}-\nabla^g$;

\vskip.06in
ii) if $(g,\nabla^{\Ric})$ is a statistical structure, then $g(X,T(Y,Z))=g(Y,T(X,Z))$, for any $X$, $Y$, $Z\in \mathfrak{X}(M)$, where $T:=\nabla^{\Ric}-\nabla^g$;
\vskip.06in

iii) if $(g,\nabla)$ is a quasi-statistical structure, then $(\Ric,\nabla)$ is a quasi-statistical structure if and only if
$g((\nabla_XQ)Z,Y)=g((\nabla_YQ)Z,X)$, for any $X$, $Y$, $Z\in \mathfrak{X}(M)$.

\bigskip

Let $f$ be a smooth function on $M$. If the Hessian of $f$, denoted by $\Hess(f)$, is non-degenerate and of constant signature, then $\Hess(f)$ is a pseudo-Riemannian metric. A nice geometrical interpretation of Hessian metrics has recently appeared in mirror symmetry \cite{hi}, their practical importance being also shown in \cite{a}.

If we denote by $\nabla^{\Hess(f)}$ the Levi-Civita connection of $\Hess(f)$, then it follows from Koszul's formula that
$$
2g(\nabla^{\Hess(f)}_XY-\nabla^g_XY,\nabla_Z^g \nabla f)=g((\nabla^g)^2_{X,Y}\nabla f,Z)+g((\nabla^g)^2_{Y,Z}\nabla f,X)-g((\nabla^g)^2_{Z,X}\nabla f,Y),
$$
for any $X$, $Y$, $Z\in \mathfrak{X}(M)$, where $\nabla f$ denotes the gradient of $f$ and $(\nabla^g)^2_{X,Y}:=\nabla^g_X\nabla^g_Y-\nabla^g_{\nabla^g_XY}$, and we prove:

\begin{theorem}
$(\Hess(f),\nabla^g)$ is a statistical structure on $M$ if and only if the radial curvature vanishes, i.e. $R^{\nabla^g}(X,Y)\nabla f=0$, for any $X$, $Y\in \mathfrak{X}(M)$.
\end{theorem}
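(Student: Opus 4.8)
The plan is to reduce the statistical-structure condition for the pair $(\Hess(f),\nabla^g)$ to a single curvature identity. Since $\nabla^g$ is the Levi-Civita connection it is automatically torsion-free, and $\Hess(f)$ is a symmetric $(0,2)$-tensor field (indeed a pseudo-Riemannian metric under the standing non-degeneracy assumption); hence the only requirement left to verify is that $\nabla^g\Hess(f)$ be a Codazzi tensor, that is,
$$(\nabla^g_X\Hess(f))(Y,Z)=(\nabla^g_Y\Hess(f))(X,Z),$$
for all $X,Y,Z\in\mathfrak{X}(M)$. So the whole proof amounts to rewriting the left-hand side in a form that exhibits its antisymmetric part in $X$ and $Y$.

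First I would record the standard identity $\Hess(f)(Y,Z)=g(\nabla^g_Y\nabla f,Z)$ and differentiate it covariantly. Expanding
$$(\nabla^g_X\Hess(f))(Y,Z)=X(\Hess(f)(Y,Z))-\Hess(f)(\nabla^g_XY,Z)-\Hess(f)(Y,\nabla^g_XZ)$$
and using metric compatibility of $\nabla^g$ to distribute $X$ through the inner product, the two terms containing $\nabla^g_XZ$ cancel, leaving
$$(\nabla^g_X\Hess(f))(Y,Z)=g\big((\nabla^g)^2_{X,Y}\nabla f,\,Z\big),$$
where $(\nabla^g)^2_{X,Y}=\nabla^g_X\nabla^g_Y-\nabla^g_{\nabla^g_XY}$ is the second covariant derivative already introduced before the statement. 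This is a routine computation and is the only calculational step.

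With this formula the Codazzi condition reads $g\big((\nabla^g)^2_{X,Y}\nabla f-(\nabla^g)^2_{Y,X}\nabla f,\,Z\big)=0$ for every $Z$, hence, since $g$ is non-degenerate, it is equivalent to $(\nabla^g)^2_{X,Y}\nabla f=(\nabla^g)^2_{Y,X}\nabla f$. The key observation is that because $\nabla^g$ is torsion-free one has $\nabla^g_XY-\nabla^g_YX=[X,Y]$, so the antisymmetrization of the second covariant derivative collapses to the Riemann curvature operator:
$$(\nabla^g)^2_{X,Y}\nabla f-(\nabla^g)^2_{Y,X}\nabla f=R^{\nabla^g}(X,Y)\nabla f.$$
Substituting this back shows that the Codazzi condition holds for all $X,Y$ if and only if $R^{\nabla^g}(X,Y)\nabla f=0$ for all $X,Y\in\mathfrak{X}(M)$, i.e. the radial curvature vanishes, which is exactly the asserted equivalence.

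I do not expect a genuine obstacle here: the argument is a direct computation. The only points requiring care are the bookkeeping in the covariant differentiation of $\Hess(f)$ (where metric compatibility is what produces the clean cancellation) and the correct use of torsion-freeness to identify the antisymmetric part with the curvature term rather than with a spurious torsion contribution. As an alternative one could start from the Koszul-type formula for $\nabla^{\Hess(f)}-\nabla^g$ displayed just before the statement and isolate the same symmetry, but the direct route above is shorter and more transparent.
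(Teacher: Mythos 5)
Your proof is correct and follows essentially the same route as the paper's: both reduce the statistical-structure condition to the Codazzi identity for $\nabla^g\Hess(f)$, expand it using metric compatibility, and identify the antisymmetrized second covariant derivative $(\nabla^g)^2_{X,Y}\nabla f-(\nabla^g)^2_{Y,X}\nabla f$ with $R^{\nabla^g}(X,Y)\nabla f$ via torsion-freeness. Your intermediate formula $(\nabla^g_X\Hess(f))(Y,Z)=g\big((\nabla^g)^2_{X,Y}\nabla f,Z\big)$ just packages the paper's chain of equivalences a bit more cleanly; there is no substantive difference.
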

\begin{proof}
$d^{\nabla^g}\Hess(f)=0$ is equivalent to $$(\nabla_X^g \Hess(f))(Y,Z)=(\nabla_Y^g \Hess(f))(X,Z),$$ for any $X$, $Y$, $Z\in \mathfrak{X}(M)$, which gives
\pagebreak
$$X(g(\nabla_Y^g \nabla f, Z))-g(\nabla^g_{\nabla_X^gY}\nabla f, Z)-g(Y,\nabla^g_{\nabla_X^gZ}\nabla f)=$$
$$=Y(g(\nabla_X^g \nabla f, Z))-g(\nabla^g_{\nabla_Y^gX}\nabla f, Z)-g(X,\nabla^g_{\nabla_Y^gZ}\nabla f)$$
$$\iff X(g(\nabla_Y^g \nabla f, Z))-g(\nabla^g_{\nabla_X^gY}\nabla f, Z)-g(\nabla_X^gZ,\nabla^g_{Y}\nabla f)=$$
$$=Y(g(\nabla_X^g \nabla f, Z))-g(\nabla^g_{\nabla_Y^gX}\nabla f, Z)-g(\nabla_Y^gZ,\nabla^g_{X}\nabla f)$$
$$\iff g(\nabla_X^g\nabla_Y^g \nabla f, Z)-g(\nabla^g_{\nabla_X^gY}\nabla f, Z)=g(\nabla_Y^g\nabla_X^g \nabla f, Z)-g(\nabla^g_{\nabla_Y^gX}\nabla f, Z)$$
$$\iff g((\nabla^g)^2_{X,Y} \nabla f-(\nabla^g)^2_{Y,X} \nabla f, Z)=0$$
which is equivalent to
$R^{\nabla^g}(X,Y)\nabla f=0$.
\end{proof}

Note that the radial curvature was introduced by Klingenberg \cite{K} in the context of algebraic topology, to prove a homotopy sphere theorem.

\begin{theorem}
If $(g,\nabla)$ is a statistical structure, then $(\Hess(f),\nabla)$ is a statistical structure if and only if
$R^{\nabla}(X,Y)\nabla f=0,$
for any $X$, $Y\in \mathfrak{X}(M)$.
\end{theorem}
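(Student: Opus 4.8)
The plan is to imitate the proof of Theorem 2.1 almost verbatim, replacing the Levi-Civita connection $\nabla^g$ by the statistical connection $\nabla$ and tracking the single place where metric compatibility was previously used. Since $(g,\nabla)$ is a statistical structure, $\nabla$ is torsion-free, so $(\Hess(f),\nabla)$ is a statistical structure precisely when $d^\nabla\Hess(f)=0$, i.e. when $(\nabla_X\Hess(f))(Y,Z)=(\nabla_Y\Hess(f))(X,Z)$ for all $X,Y,Z\in\mathfrak{X}(M)$. Writing $\Hess(f)(Y,Z)=g(\nabla_Y\nabla f,Z)$ and expanding $(\nabla_X\Hess(f))(Y,Z)=X\big(g(\nabla_Y\nabla f,Z)\big)-g(\nabla_{\nabla_XY}\nabla f,Z)-g(\nabla_Y\nabla f,\nabla_XZ)$, I would subtract the same expression with $X$ and $Y$ interchanged.

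The new ingredient is that $\nabla$ is not metric, so the term $X\big(g(\nabla_Y\nabla f,Z)\big)$ cannot be differentiated through $g$ using $\nabla$ alone, as it could for $\nabla^g$. This is exactly where the statistical hypothesis must enter: I would pass to the dual connection $\nabla^*$ and use $X\big(g(\nabla_Y\nabla f,Z)\big)=g(\nabla_X\nabla_Y\nabla f,Z)+g(\nabla_Y\nabla f,\nabla^*_XZ)$. The purely second-order terms then assemble, via torsion-freeness $\nabla_XY-\nabla_YX=[X,Y]$, into $g(\nabla_X\nabla_Y\nabla f-\nabla_Y\nabla_X\nabla f-\nabla_{[X,Y]}\nabla f,Z)=g(R^\nabla(X,Y)\nabla f,Z)$, mirroring the $(\nabla^g)^2$ reduction in Theorem 2.1.

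The main obstacle is the leftover first-order terms, which in the non-metric setting carry the difference tensor $K_XZ:=\nabla^*_XZ-\nabla_XZ$ and have no analogue in Theorem 2.1. To dispose of them I would invoke the symmetries available in a statistical structure — that $K$ is symmetric, $K_XY=K_YX$, that $g(K_XY,Z)$ is totally symmetric, and the dual-curvature identity $g(R^\nabla(X,Y)Z,W)=-g(Z,R^{\nabla^*}(X,Y)W)$ — together with the symmetry of $\Hess(f)$, in order to show that these terms cancel after antisymmetrization in $X,Y$. This reconciliation of the covector condition $g(\nabla f,R^\nabla(X,Y)Z)=0$ with the vanishing of the endomorphism $R^\nabla(X,Y)\nabla f$ is the delicate point, since the duality relation tends to produce the dual curvature $R^{\nabla^*}$ rather than $R^\nabla$; controlling this is where I expect the real work to lie. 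Once the first-order terms are cleared one is left with $d^\nabla\Hess(f)(X,Y,Z)=g(R^\nabla(X,Y)\nabla f,Z)$, and non-degeneracy of $g$ together with the arbitrariness of $Z$ yields the stated equivalence.
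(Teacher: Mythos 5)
Your computation tracks the paper's own proof essentially line by line, just with dual-connection bookkeeping in place of $\nabla g$ bookkeeping: since $(\nabla_Xg)(U,Z)=g(U,K_XZ)$ for $K_XZ:=\nabla^*_XZ-\nabla_XZ$, your leftover first-order terms are literally the paper's terms $(\nabla_Xg)(\nabla_Y\nabla f,Z)-(\nabla_Yg)(\nabla_X\nabla f,Z)$, and both arguments arrive at the same identity
\begin{equation*}
d^\nabla\Hess(f)(X,Y,Z)=g(R^\nabla(X,Y)\nabla f,Z)+C(X,AY,Z)-C(Y,AX,Z),
\end{equation*}
where $C:=\nabla g$ is the (totally symmetric) cubic form of the statistical structure and $A:=\nabla\nabla f$ is the ($g$-self-adjoint) endomorphism $Y\mapsto\nabla_Y\nabla f$. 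The genuine gap is the step you explicitly defer: you never show that the remainder $C(X,AY,Z)-C(Y,AX,Z)$ vanishes, only that you expect the symmetries of $K$ and the dual-curvature identity to force it. They cannot. Total symmetry of $C$ only permutes its three slots and can never extract $Y$ from inside $AY$, and self-adjointness of $A$ does not help: already pointwise, with $C$ the symmetrization of $e^1\otimes e^1\otimes e^2$ and $A=\on{diag}(1,2)$ on the Euclidean plane, one has $C(e_1,Ae_2,e_1)=2\neq1=C(e_2,Ae_1,e_1)$.

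In fact the remainder is a genuine obstruction, so this route (yours or the paper's) cannot be completed. Take the Hessian structure on $M=\{(x,y)\in\mathbb{R}^2:\,x>-1\}$ with $\nabla=\partial$ the flat connection and $g=\on{diag}(1+x,1)$; then $(g,\nabla)$ is statistical, $R^\nabla\equiv0$, so $R^\nabla(X,Y)\nabla f=0$ holds for \emph{every} $f$. For $f=xy$ one gets $\nabla f=\big(\tfrac{y}{1+x},\,x\big)$ and $\Hess(f)(Y,Z)=g(\nabla_Y\nabla f,Z)$ with components $\Hess(f)(\partial_x,\partial_x)=-\tfrac{y}{1+x}$, $\Hess(f)(\partial_x,\partial_y)=\Hess(f)(\partial_y,\partial_x)=1$, $\Hess(f)(\partial_y,\partial_y)=0$ (symmetric, determinant $-1$, hence a pseudo-Riemannian metric), yet $(\nabla_{\partial_x}\Hess(f))(\partial_y,\partial_x)-(\nabla_{\partial_y}\Hess(f))(\partial_x,\partial_x)=\tfrac{1}{1+x}\neq0$, so $(\Hess(f),\nabla)$ is \emph{not} statistical. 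For comparison, the paper disposes of the same remainder by asserting it vanishes ``from $d^\nabla g=0$''; but Codazzi symmetry only gives $(\nabla_Xg)(U,Z)=(\nabla_Ug)(X,Z)$, which rewrites the remainder without killing it, so the published proof stumbles at exactly the point you flagged as delicate. Your instinct about where the difficulty sits — and that duality naturally produces $R^{\nabla^*}$ — is correct: what the computation honestly proves is the equivalence of $d^\nabla\Hess(f)=0$ with $g(R^\nabla(X,Y)\nabla f,Z)=C(Y,AX,Z)-C(X,AY,Z)$, and a clean curvature criterion does hold, but for the intrinsic Hessian $\nabla df$, where $d^\nabla(\nabla df)(X,Y,Z)=-df(R^\nabla(X,Y)Z)$, i.e.\ the condition is $R^{\nabla^*}(X,Y)\nabla f=0$ with the dual curvature.
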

\begin{proof}
$d^{\nabla}\Hess(f)=0$ is equivalent to $$(\nabla_X \Hess(f))(Y,Z)-(\nabla_Y \Hess(f))(X,Z)+\Hess(f)(T^{\nabla}(X,Y),Z)=0,$$ for any $X$, $Y$, $Z\in \mathfrak{X}(M)$, which gives
\begin{align*}
X(g(\nabla_Y \nabla f, Z))-g(\nabla_{\nabla_XY}\nabla f, Z)-g(Y,\nabla_{\nabla_XZ}\nabla f)
-Y(g(\nabla_X \nabla f, Z))+\; & \\
+g(\nabla_{\nabla_YX}\nabla f, Z)+g(X,\nabla_{\nabla_YZ}\nabla f)+g(T^{\nabla}(X,Y),\nabla_Z \nabla f)=&\,0.
\end{align*}

Since $\nabla$ is torsion-free, we can express its curvature in terms of the second order derivatives, namely $R^{\nabla}(X,Y)=\nabla^2_{X,Y}-\nabla^2_{Y,X}$, where $\nabla^2_{X,Y}:=\nabla_X\nabla_Y-\nabla_{\nabla_XY}$ and the above equation becomes:
$$(\nabla_Xg)(\nabla_Y \nabla f, Z)-(\nabla_Yg)(\nabla_X \nabla f, Z)+g(R^{\nabla}(X,Y)\nabla f,Z)=0$$
which, from $d^{\nabla}g=0$, is equivalent to
$R^{\nabla}(X,Y)\nabla f=0$.
\end{proof}

\smallskip

Next, we shall relate statistical structures to gradient solitons (see also \cite{b}).
Recall that, for a pseudo-Riemannian metric $g$ and two smooth functions $f$ and $\lambda$, the triple $(g,f,\lambda)$ is called:

\vskip.06in
i) \textit{a gradient almost Ricci soliton} if
$$\Hess(f)+\Ric=\lambda g,$$
where $\Hess(f)$ is the Hessian of $f$ and $\Ric$ is the Ricci tensor of $g$;
\pagebreak

\vskip.06in
ii) \textit{a gradient almost Einstein soliton} if
$$\Hess(f)+\Ric=\left(\lambda+\frac{\scal}{2}\right) g,$$
where $\scal$ is the scalar curvature of $(M,g)$;

\vskip.06in
iii) \textit{a gradient almost Yamabe soliton} if
$$
\Hess(f)=(\lambda-\scal) g.
$$

In particular, if $\lambda$ is a constant, then we drop the adjective ``almost'' from the previous definitions and call the solitons the \textit{gradient Ricci}, \textit{gradient Einstein} and \textit{gradient Yamabe}, respectively.

Taking now the covariant derivative in the soliton equations, we obtain re\-spec\-tive\-ly:
$$(\nabla^g_X \Hess(f))(Y,Z)+(\nabla^g_X \Ric)(Y,Z)=X(\lambda)g(Y,Z),$$
$$(\nabla^g_X \Hess(f))(Y,Z)+(\nabla^g_X \Ric)(Y,Z)=X\left(\lambda+\frac{\scal}{2}\right)g(Y,Z),$$
$$(\nabla^g_X \Hess(f))(Y,Z)=X(\lambda-\scal)g(Y,Z),$$
for any $X$, $Y$, $Z\in \mathfrak{X}(M)$ and we can state:

\begin{proposition}
\begin{itemize}
\item[i)]
If $(g,f,\lambda)$ defines a gradient Ricci soliton, then
$(\Hess(f),\nabla^g)$ is a statistical structure on $M$ if and only if $(\Ric, \nabla^g)$ is a statistical structure on
\nolinebreak
$M$.

\item[ii)]If $(g,f,\lambda)$ defines a gradient Einstein soliton and $M$ is of constant scalar curvature, then $(\Hess(f),\nabla^g)$ is a statistical structure on $M$ if and only if $(\Ric, \nabla^g)$ is a statistical structure on $M$.

\item[iii)]If $(g,f,\lambda)$ defines a gradient Yamabe soliton and $M$ is of constant scalar curvature, then $(\Hess(f),\nabla^g)$ is a statistical structure on $M$.
\end{itemize}
\end{proposition}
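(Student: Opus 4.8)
The plan is to read off everything from the three covariant-derivative identities derived immediately above the statement, combined with the following elementary reformulation of the statistical condition: since $\nabla^g$ is torsion-free, a symmetric $(0,2)$-tensor $h$ (here $h=\Hess(f)$ or $h=\Ric$) forms a statistical structure with $\nabla^g$ precisely when $d^{\nabla^g}h=0$, i.e. when $(\nabla^g_X h)(Y,Z)$ is symmetric under the exchange of $X$ and $Y$. For the Ricci tensor this is exactly the condition recorded in Remark i), namely $(\nabla^g_X Q)Y=(\nabla^g_Y Q)X$. So in each case the whole problem reduces to comparing the $X\leftrightarrow Y$ antisymmetrizations of the relevant covariant derivatives.

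For part i), constancy of $\lambda$ forces $X(\lambda)=0$, so the first identity collapses to $(\nabla^g_X\Hess(f))(Y,Z)=-(\nabla^g_X\Ric)(Y,Z)$. Antisymmetrizing in $X,Y$ gives
$$(\nabla^g_X\Hess(f))(Y,Z)-(\nabla^g_Y\Hess(f))(X,Z)=-\bigl[(\nabla^g_X\Ric)(Y,Z)-(\nabla^g_Y\Ric)(X,Z)\bigr],$$
so the Codazzi defect of $\Hess(f)$ is the negative of that of $\Ric$; hence one vanishes identically if and only if the other does, which is precisely the claimed equivalence of the two statistical structures.

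For part ii), the additional hypothesis of constant scalar curvature together with the constancy of $\lambda$ yields $X\!\left(\lambda+\tfrac{\scal}{2}\right)=0$, so the second identity again reduces to $(\nabla^g_X\Hess(f))(Y,Z)=-(\nabla^g_X\Ric)(Y,Z)$, and the same antisymmetrization argument as in part i) applies verbatim. For part iii), constancy of $\lambda$ and of $\scal$ gives $X(\lambda-\scal)=0$, so the third identity becomes $(\nabla^g_X\Hess(f))(Y,Z)=0$ for all $X,Y,Z$, i.e. $\nabla^g\Hess(f)=0$ identically. The symmetry condition $(\nabla^g_X\Hess(f))(Y,Z)=(\nabla^g_Y\Hess(f))(X,Z)$ then holds trivially, so $(\Hess(f),\nabla^g)$ is automatically a statistical structure.

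I expect no genuine obstacle here: the analytic content is already packaged in the three pre-computed identities, and the proof is purely a matter of checking that the stated constancy hypotheses annihilate each right-hand side. The only point requiring care is bookkeeping about which hypotheses are needed where: part i) uses only constancy of $\lambda$, whereas parts ii) and iii) genuinely require constant scalar curvature as well, and it is worth remarking that in iii) the conclusion is one-directional precisely because the right-hand side becomes zero rather than a multiple of the Ricci derivative.
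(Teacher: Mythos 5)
Your proof is correct and follows exactly the route the paper intends: the paper states this proposition without an explicit proof, immediately after deriving the three covariant-derivative identities, and your argument (constancy hypotheses kill each right-hand side, then antisymmetrize in $X,Y$ to compare Codazzi defects, with part iii) giving $\nabla^g\Hess(f)=0$ and hence a one-directional conclusion) is precisely the implicit reasoning. No gaps; the bookkeeping of which hypotheses are needed where matches the paper's statement.
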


We deduce the followings:

\begin{proposition}\label{p1}
If $(g,f,\lambda)$ defines a gradient almost Einstein soliton on the smooth manifold $M$ with non-degenerate Ricci tensor, then $(\Ric, \nabla^g)$ is a statistical structure on $M$ if and only if
\begin{equation}\label{e1}
g(R^{\nabla^g}(X,Y)\nabla f,Z)=X\left(\lambda+\frac{\scal}{2}\right)g(Y,Z)-Y\left(\lambda+\frac{\scal}{2}\right)g(X,Z),
\end{equation}
for any $X$, $Y$, $Z\in \mathfrak{X}(M)$.
\end{proposition}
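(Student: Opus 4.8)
The plan is to read the equivalence off the already-differentiated soliton equation, combined with the Hessian--curvature identity hidden inside the proof of Theorem~2.1. Write $\mu:=\lambda+\frac{\scal}{2}$ for brevity. First I would record that, since $\nabla^g$ is torsion-free, $(\Ric,\nabla^g)$ is a statistical structure exactly when $\Ric$ is a Codazzi tensor for $\nabla^g$, i.e.
$$(\nabla^g_X\Ric)(Y,Z)=(\nabla^g_Y\Ric)(X,Z),$$
for all $X,Y,Z\in\mathfrak{X}(M)$; this is the condition $d^{\nabla^g}\Ric=0$ with the vanishing torsion term dropped.

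Next I would start from the covariant derivative of the gradient almost Einstein equation, displayed just above, namely
$$(\nabla^g_X\Hess(f))(Y,Z)+(\nabla^g_X\Ric)(Y,Z)=X(\mu)\,g(Y,Z).$$
Solving this for $(\nabla^g_X\Ric)(Y,Z)$ and then antisymmetrizing in $X$ and $Y$, the scalar terms produce $X(\mu)g(Y,Z)-Y(\mu)g(X,Z)$, while the Hessian terms assemble into the antisymmetrized difference $(\nabla^g_X\Hess(f))(Y,Z)-(\nabla^g_Y\Hess(f))(X,Z)$.

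The decisive input is the identity established inside the proof of Theorem~2.1,
$$(\nabla^g_X\Hess(f))(Y,Z)-(\nabla^g_Y\Hess(f))(X,Z)=g(R^{\nabla^g}(X,Y)\nabla f,Z),$$
which rewrites that antisymmetrized Hessian difference as the radial curvature term. Substituting it into the previous step yields
$$(\nabla^g_X\Ric)(Y,Z)-(\nabla^g_Y\Ric)(X,Z)=X(\mu)\,g(Y,Z)-Y(\mu)\,g(X,Z)-g(R^{\nabla^g}(X,Y)\nabla f,Z).$$
The Codazzi condition for $\Ric$ is precisely the vanishing of the left-hand side, so $(\Ric,\nabla^g)$ is a statistical structure if and only if the right-hand side vanishes, which is exactly equation~\eqref{e1}. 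Since the whole argument is a linear rearrangement of two identities, I expect no genuine obstacle; the only points requiring care are the sign bookkeeping in the antisymmetrization and matching the curvature convention $R^{\nabla^g}(X,Y)=(\nabla^g)^2_{X,Y}-(\nabla^g)^2_{Y,X}$ used in Theorem~2.1.
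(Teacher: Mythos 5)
Your proof is correct and follows exactly the route the paper intends: the paper states this proposition as a direct deduction ("We deduce the followings") from the displayed covariant derivative of the soliton equation combined with the identity $(\nabla^g_X\Hess(f))(Y,Z)-(\nabla^g_Y\Hess(f))(X,Z)=g(R^{\nabla^g}(X,Y)\nabla f,Z)$ established in the proof of Theorem~2.1, which is precisely your antisymmetrization argument. The sign bookkeeping and the identification of the statistical condition with the Codazzi condition for $\Ric$ (using that $\nabla^g$ is torsion-free and $\Ric$ is non-degenerate) are all handled correctly.
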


\begin{proposition}
If $(g,f,\lambda)$ defines a gradient almost Ricci soliton on the smooth manifold $M$ with non-degenerate Ricci tensor, then $(\Ric, \nabla^g)$ is a statistical struc\-ture on $M$ if and only if
\begin{equation}\label{e2}
g(R^{\nabla^g}(X,Y)\nabla f,Z)=X(\lambda)g(Y,Z)-Y(\lambda)g(X,Z),
\end{equation}
for any $X$, $Y$, $Z\in \mathfrak{X}(M)$.
\end{proposition}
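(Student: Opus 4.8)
The plan is to mimic the proof of Proposition~\ref{p1}, simply replacing the function $\lambda+\frac{\scal}{2}$ by $\lambda$. I would start from the already-recorded covariant derivative of the gradient almost Ricci soliton equation,
$$(\nabla^g_X \Hess(f))(Y,Z) + (\nabla^g_X \Ric)(Y,Z) = X(\lambda)g(Y,Z),$$
and antisymmetrize it in $X$ and $Y$: writing the same identity with $X$ and $Y$ interchanged and subtracting, I obtain
$$\left[(\nabla^g_X \Hess(f))(Y,Z) - (\nabla^g_Y \Hess(f))(X,Z)\right] + \left[(\nabla^g_X \Ric)(Y,Z) - (\nabla^g_Y \Ric)(X,Z)\right] = X(\lambda)g(Y,Z) - Y(\lambda)g(X,Z).$$

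Next I would identify the two bracketed terms. The first bracket is precisely $d^{\nabla^g}\Hess(f)(X,Y,Z)$, with no torsion contribution since $\nabla^g$ is the Levi-Civita connection; by the chain of equivalences established in the proof of Theorem~2.1 it equals $g(R^{\nabla^g}(X,Y)\nabla f, Z)$. The second bracket is the Codazzi defect of $\Ric$ with respect to $\nabla^g$; because $\Ric$ is non-degenerate by hypothesis, it is a pseudo-Riemannian metric, so $(\Ric,\nabla^g)$ is a statistical structure exactly when this defect vanishes, i.e. $(\nabla^g_X\Ric)(Y,Z) = (\nabla^g_Y\Ric)(X,Z)$ for all $X,Y,Z\in\mathfrak{X}(M)$. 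Substituting these identifications, the antisymmetrized equation reads
$$g(R^{\nabla^g}(X,Y)\nabla f, Z) + \left[(\nabla^g_X \Ric)(Y,Z) - (\nabla^g_Y \Ric)(X,Z)\right] = X(\lambda)g(Y,Z) - Y(\lambda)g(X,Z),$$
from which the equivalence is immediate: the Codazzi defect of $\Ric$ vanishes if and only if $g(R^{\nabla^g}(X,Y)\nabla f, Z) = X(\lambda)g(Y,Z) - Y(\lambda)g(X,Z)$, which is exactly \eqref{e2}.

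I do not anticipate a genuine obstacle, as the argument is a direct antisymmetrization of an identity already in hand. The only point requiring care is the identification of the antisymmetrized Hessian derivative with the radial curvature $g(R^{\nabla^g}(X,Y)\nabla f, Z)$; this is not a fresh computation but is borrowed from the equivalences in the proof of Theorem~2.1, where the metric compatibility of $\nabla^g$ is used to rewrite the raw Hessian expression as the commutator $(\nabla^g)^2_{X,Y}\nabla f - (\nabla^g)^2_{Y,X}\nabla f$. One should also keep in mind that the non-degeneracy assumption on $\Ric$ is what makes the phrase ``$(\Ric,\nabla^g)$ is a statistical structure'' meaningful, rather than playing an active role in the calculation.
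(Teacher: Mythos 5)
Your proof is correct and follows essentially the same route the paper intends: the paper states this proposition as an immediate deduction from the covariant derivative of the soliton equation, $(\nabla^g_X \Hess(f))(Y,Z)+(\nabla^g_X \Ric)(Y,Z)=X(\lambda)g(Y,Z)$, antisymmetrized in $X,Y$ and combined with the identity $(\nabla^g_X\Hess(f))(Y,Z)-(\nabla^g_Y\Hess(f))(X,Z)=g(R^{\nabla^g}(X,Y)\nabla f,Z)$ established in the proof of Theorem 2.1. Your identification of the Codazzi defect of $\Ric$ and your remark on the role of non-degeneracy are exactly the intended reading.
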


\begin{proposition}
If $(g,\nabla)$ is a statistical structure on the smooth manifold $M$ and $(g,f,\lambda)$ defines a gradient almost Einstein soliton on $M$ with non-degenerate Ricci tensor, then $(\Ric, \nabla)$ is a statistical structure on $M$ if and only if
\begin{equation}
g(\nabla^2_{X,Z}\nabla f, Y)-g(\nabla^2_{Y,Z}\nabla f, X)=X\left(\lambda+\frac{\scal}{2}\right)g(Y,Z)-Y\left(\lambda+\frac{\scal}{2}\right)g(X,Z),
\end{equation}
for any $X$, $Y$, $Z\in \mathfrak{X}(M)$, where $\nabla^2_{X,Y}Z:=\nabla_X\nabla_YZ-\nabla_{\nabla_XY}Z$.
\end{proposition}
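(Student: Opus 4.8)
The plan is to mirror the proof of Proposition \ref{p1}, but to carry the affine connection $\nabla$ through every step and to invoke the hypothesis that $(g,\nabla)$ is a statistical structure twice. Since $\nabla$ is torsion-free, $(\Ric,\nabla)$ is a statistical structure exactly when $\Ric$ is a Codazzi tensor for $\nabla$, that is $(\nabla_X\Ric)(Y,Z)=(\nabla_Y\Ric)(X,Z)$ for all $X,Y,Z\in\mathfrak{X}(M)$ (the non-degeneracy of $\Ric$ being what lets $\Ric$ play the role of the metric in a statistical structure). First I would differentiate the soliton equation $\Hess(f)+\Ric=\left(\lambda+\frac{\scal}{2}\right)g$ covariantly along $\nabla$ and antisymmetrize in $X$ and $Y$. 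Because $d^{\nabla}g=0$ yields $(\nabla_Xg)(Y,Z)=(\nabla_Yg)(X,Z)$, the difference of the two $\nabla g$-terms produced by $\lambda+\frac{\scal}{2}$ vanishes, so the Codazzi condition for $\Ric$ becomes equivalent to
\[
(\nabla_X\Hess(f))(Y,Z)-(\nabla_Y\Hess(f))(X,Z)=X\!\left(\lambda+\frac{\scal}{2}\right)g(Y,Z)-Y\!\left(\lambda+\frac{\scal}{2}\right)g(X,Z).
\]

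It then remains to identify the left-hand side with $g(\nabla^2_{X,Z}\nabla f,Y)-g(\nabla^2_{Y,Z}\nabla f,X)$. As in the proof of Theorem 2.2, the statistical hypothesis makes the tensor $(Y,Z)\mapsto g(\nabla_Y\nabla f,Z)$ symmetric, so I may write $\Hess(f)(Y,Z)=g(\nabla_Z\nabla f,Y)$. Expanding $(\nabla_X\Hess(f))(Y,Z)$ by the Leibniz rule, the two terms involving $\nabla_XY$ cancel and the remaining second derivatives assemble into $\nabla_X\nabla_Z\nabla f-\nabla_{\nabla_XZ}\nabla f=\nabla^2_{X,Z}\nabla f$, leaving
\[
(\nabla_X\Hess(f))(Y,Z)=(\nabla_Xg)(\nabla_Z\nabla f,Y)+g(\nabla^2_{X,Z}\nabla f,Y).
\]
Subtracting the $X\leftrightarrow Y$ version produces the desired $\nabla^2$-difference together with the leftover $(\nabla_Xg)(\nabla_Z\nabla f,Y)-(\nabla_Yg)(\nabla_Z\nabla f,X)$.

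The step I expect to be the main obstacle is showing this leftover vanishes, since it is not an instance of the Codazzi identity read off directly in the pair $X,Y$. Here the statistical hypothesis enters a second time, in a sharper form: for torsion-free $\nabla$, the condition $d^{\nabla}g=0$ combined with the symmetry of $g$ forces the $(0,3)$-tensor $(X,Y,Z)\mapsto(\nabla_Xg)(Y,Z)$ to be totally symmetric, whence $(\nabla_Xg)(W,Y)=(\nabla_Yg)(W,X)$ with $W=\nabla_Z\nabla f$. This cancels the leftover and reduces the equivalence exactly to the asserted identity, completing the proof. As a consistency check I would specialize to $\nabla=\nabla^g$: then $\nabla g=0$, the two admissible groupings of the Hessian difference agree, and $g(\nabla^2_{X,Z}\nabla f,Y)-g(\nabla^2_{Y,Z}\nabla f,X)$ collapses to $g(R^{\nabla^g}(X,Y)\nabla f,Z)$, recovering Proposition \ref{p1}.
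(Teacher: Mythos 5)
Your proposal is correct and takes essentially the approach the paper intends: this proposition is stated there without an explicit proof, as a deduction obtained exactly as you do it --- differentiate the soliton equation along $\nabla$, use $d^{\nabla}g=0$ to discard the $(\lambda+\frac{\scal}{2})\nabla g$ terms, and identify the antisymmetrized derivative of the Hessian by the same expansion as in Theorem 2.2 (including its convention $\Hess(f)(Y,Z)=g(\nabla_Y\nabla f,Z)$, symmetric by the statistical hypothesis). Your handling of the leftover terms is in fact the one genuinely needed ingredient: grouping the second derivatives as $\nabla^2_{X,Z}\nabla f$ leaves the remainder $(\nabla_Xg)(\nabla_Z\nabla f,Y)-(\nabla_Yg)(\nabla_Z\nabla f,X)$, which does vanish by total symmetry of the cubic form $\nabla g$, and this is precisely why the proposition's formula carries the $\nabla^2_{X,Z}$ grouping rather than the curvature grouping $R^{\nabla}(X,Y)\nabla f$ of Theorem 2.2, whose analogous leftover $(\nabla_Xg)(\nabla_Y\nabla f,Z)-(\nabla_Yg)(\nabla_X\nabla f,Z)$ is not covered by total symmetry.
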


\bigskip

From the soliton equations, we deduce respectively the followings:
$$\nabla^g \xi +Q=\lambda I,$$
$$\nabla^g \xi +Q=\left(\lambda+\frac{\scal}{2}\right) I,$$
$$\nabla^g \xi =(\lambda-\scal) I,$$
where $Q$ stands for the Ricci operator and $\xi:=\nabla f$. These lead to a more general notion of soliton, precisely
we consider
an almost $(\nabla,J,\xi, \lambda)$-soliton on $M$ as a data $(\nabla,J,\xi,\lambda)$ which satisfy the equation:
\begin{equation}\label{e7}
\nabla \xi+J=\lambda I,
\end{equation}
where $\nabla$ is an affine connection, $J$ is a $(1,1)$-tensor field, $\xi$ is a vector field and $\lambda$ is a smooth function on $M$.

A straightforward computation gives:

\begin{lemma}
If $(\nabla,J,\xi,\lambda)$ defines an almost $(\nabla,J,\xi)$-soliton on the pseudo-Rie\-man\-ni\-an manifold $(M,g)$, then the $2$-form $\Omega:=g(J \cdot,\cdot)$ is symmetric if and only if the endomorphism $\nabla\xi$ is self-adjoint with respect to $g$, i.e.
$$g(\nabla_X\xi,Y)=g(X, \nabla_Y\xi),$$
for any $X$, $Y\in \mathfrak{X}(M)$.
\end{lemma}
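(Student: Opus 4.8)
The plan is to rewrite the soliton equation \eqref{e7} in its pointwise form and then compare $\Omega(X,Y)$ with $\Omega(Y,X)$ directly. From $\nabla\xi + J = \lambda I$, applying both sides to a vector field $X$ gives $JX = \lambda X - \nabla_X\xi$. Substituting this into the definition $\Omega(X,Y) = g(JX,Y)$ yields
$$\Omega(X,Y) = \lambda\, g(X,Y) - g(\nabla_X\xi, Y).$$

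Next I would exploit the symmetry of $g$. The term $\lambda\, g(X,Y)$ is automatically symmetric in $X$ and $Y$, so it cancels when we form the difference $\Omega(X,Y) - \Omega(Y,X)$. What remains is
$$\Omega(X,Y) - \Omega(Y,X) = -g(\nabla_X\xi, Y) + g(\nabla_Y\xi, X) = g(X, \nabla_Y\xi) - g(\nabla_X\xi, Y),$$
where in the last step I again used $g(\nabla_Y\xi, X) = g(X, \nabla_Y\xi)$.

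From this identity the equivalence is immediate: $\Omega$ is symmetric, i.e. $\Omega(X,Y) = \Omega(Y,X)$ for all $X$, $Y$, precisely when $g(\nabla_X\xi, Y) = g(X, \nabla_Y\xi)$ for all $X$, $Y$, which is exactly the stated self-adjointness of the endomorphism $\nabla\xi$.

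I do not anticipate a genuine obstacle, since the result is a one-line consequence of the soliton relation; the only point worth stating carefully is that the $\lambda I$ contribution drops out precisely because $g$ is symmetric, so that the failure of $\Omega$ to be symmetric is governed entirely by the $\nabla\xi$ part. Note also that the $(1,1)$-tensor $J$ need not be otherwise constrained: the equivalence holds for any almost $(\nabla,J,\xi,\lambda)$-soliton, regardless of the torsion or curvature of $\nabla$.
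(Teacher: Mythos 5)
Your proof is correct and is precisely the ``straightforward computation'' the paper alludes to: substituting $J = \lambda I - \nabla\xi$ from the soliton equation into $\Omega(X,Y)=g(JX,Y)$ and using the symmetry of $g$ so that the $\lambda$-term cancels in $\Omega(X,Y)-\Omega(Y,X)$. Nothing is missing, and your closing remark that no torsion or curvature hypothesis on $\nabla$ is needed is accurate.
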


\begin{lemma}{\rm\cite{b}}\label{le}
The $2$-form $\Omega:=g(J \cdot,\cdot)$ satisfies $(\nabla_X\Omega)(Y,Z)=(\nabla_Y\Omega)(X,Z)$ if and only if
$$(\nabla_Xg)(JY,Z)-(\nabla_Yg)(JX,Z)=g((\nabla_YJ)X-(\nabla_XJ)Y,Z).$$
In particular, $\Omega$ is a Codazzi tensor field, i.e. $(\nabla^g_X\Omega)(Y,Z)=(\nabla^g_Y\Omega)(X,Z)$, if and only if $J$ is a Codazzi tensor field, i.e. $(\nabla^g_XJ)Y=(\nabla^g_YJ)X$.
\end{lemma}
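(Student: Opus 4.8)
The plan is to reduce everything to a single ``Leibniz-type'' decomposition of $\nabla\Omega$ and then exploit its symmetry under interchange of the first two arguments. First I would establish, for an arbitrary affine connection $\nabla$, the identity
\begin{equation*}
(\nabla_X\Omega)(Y,Z) = (\nabla_X g)(JY,Z) + g((\nabla_X J)Y,Z).
\end{equation*}
To obtain this I would start from the definition of the covariant derivative of the $(0,2)$-tensor $\Omega$, namely $(\nabla_X\Omega)(Y,Z) = X(\Omega(Y,Z)) - \Omega(\nabla_XY,Z) - \Omega(Y,\nabla_XZ)$, substitute $\Omega(Y,Z)=g(JY,Z)$, and then rewrite the term $X(g(JY,Z))$ via the definition of $\nabla_Xg$ applied to the arguments $JY$ and $Z$. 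The two copies of $g(JY,\nabla_XZ)$ cancel, and the surviving terms $g(\nabla_X(JY),Z)-g(J\nabla_XY,Z)$ recombine into $g((\nabla_XJ)Y,Z)$, since $(\nabla_XJ)Y=\nabla_X(JY)-J\nabla_XY$.

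With this identity in hand, the first equivalence is immediate. Interchanging $X$ and $Y$ yields the corresponding expression for $(\nabla_Y\Omega)(X,Z)$, and subtracting, the condition $(\nabla_X\Omega)(Y,Z)=(\nabla_Y\Omega)(X,Z)$ becomes
\begin{equation*}
(\nabla_Xg)(JY,Z) - (\nabla_Yg)(JX,Z) = g((\nabla_YJ)X - (\nabla_XJ)Y,Z),
\end{equation*}
which is precisely the asserted relation, obtained simply by moving the metric-derivative terms to one side and the $J$-derivative terms to the other.

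For the ``in particular'' statement I would specialize to the Levi-Civita connection $\nabla=\nabla^g$. Since $\nabla^gg=0$, the left-hand side of the equivalence vanishes identically, so the Codazzi condition $(\nabla^g_X\Omega)(Y,Z)=(\nabla^g_Y\Omega)(X,Z)$ reduces to $g((\nabla^g_YJ)X-(\nabla^g_XJ)Y,Z)=0$ for every $Z\in\mathfrak{X}(M)$. Invoking the non-degeneracy of $g$, this is equivalent to $(\nabla^g_XJ)Y=(\nabla^g_YJ)X$, i.e. to $J$ being a Codazzi tensor field.

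The computation is entirely routine; the only point demanding care is the Leibniz-rule bookkeeping that makes the $g(JY,\nabla_XZ)$ terms cancel, and I do not expect any genuine obstacle.
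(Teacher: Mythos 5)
Your proof is correct. The paper itself offers no proof of this lemma---it is quoted from reference \cite{b}---but your argument is exactly the straightforward computation the statement rests on: the Leibniz-rule identity $(\nabla_X\Omega)(Y,Z)=(\nabla_Xg)(JY,Z)+g((\nabla_XJ)Y,Z)$, antisymmetrization in $X$ and $Y$ to get the stated equivalence, and then the specialization $\nabla=\nabla^g$, where $\nabla^gg=0$ together with the non-degeneracy of $g$ reduces the Codazzi condition on $\Omega$ to the Codazzi condition on $J$.
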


\begin{remark}
If $\Omega$ is a Codazzi tensor field and $J$ is a Killing tensor field (i.e. $(\nabla^g_XJ)X=0$, for any $X\in \mathfrak{X}(M)$), then $J$ is $\nabla^g$-parallel.
\end{remark}

As particular cases, we deduce from \cite{b} the followings:

\begin{proposition}\label{p6}
Let $(\nabla,J,\xi,\lambda)$ define an almost $(\nabla,J,\xi)$-soliton on a pseudo-Riemannian manifold $(M,g)$. If $\Omega:=g(J \cdot,\cdot)$ is symmetric and $\nabla$ is torsion-free, then $(\Omega, \nabla)$ is a nearly statistical structure on $M$ if and only if
$$g(R^{\nabla}(X,Y)\xi,Z)=(\nabla_Xg)(JY,Z)-(\nabla_Yg)(JX,Z)+g(X(\lambda)Y-Y(\lambda)X, Z),$$
for any $X$, $Y$, $Z\in \mathfrak{X}(M)$.
\end{proposition}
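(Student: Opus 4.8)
The plan is to read off the defining condition of a nearly statistical structure and then reduce everything to a single computation of the covariant derivative of $J$ via the soliton equation. By definition, $(\Omega,\nabla)$ is a nearly statistical structure precisely when $\Omega$ is symmetric (which is assumed) and $(\nabla_X\Omega)(Y,Z)=(\nabla_Y\Omega)(X,Z)$ for all $X,Y,Z\in\mathfrak{X}(M)$. My first step is therefore to invoke Lemma \ref{le}, which translates this Codazzi-type condition on $\Omega=g(J\cdot,\cdot)$ into the equivalent statement
$$(\nabla_Xg)(JY,Z)-(\nabla_Yg)(JX,Z)=g((\nabla_YJ)X-(\nabla_XJ)Y,Z),$$
valid for any $X,Y,Z$. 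This already isolates the term $(\nabla_YJ)X-(\nabla_XJ)Y$ as the only object I still need to identify.

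The second step is to compute $(\nabla_XJ)Y$ using the soliton equation \eqref{e7}, which I rewrite as $JX=\lambda X-\nabla_X\xi$. A direct expansion gives $(\nabla_XJ)Y=\nabla_X(JY)-J(\nabla_XY)$, and substituting $JW=\lambda W-\nabla_W\xi$ the terms $\lambda\nabla_XY$ cancel, leaving
$$(\nabla_XJ)Y=X(\lambda)\,Y-\bigl(\nabla_X\nabla_Y\xi-\nabla_{\nabla_XY}\xi\bigr)=X(\lambda)\,Y-\nabla^2_{X,Y}\xi,$$
with $\nabla^2_{X,Y}:=\nabla_X\nabla_Y-\nabla_{\nabla_XY}$ as in the earlier statements. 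Swapping $X$ and $Y$ and subtracting then yields
$$(\nabla_YJ)X-(\nabla_XJ)Y=Y(\lambda)\,X-X(\lambda)\,Y+\bigl(\nabla^2_{X,Y}\xi-\nabla^2_{Y,X}\xi\bigr).$$

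The third step is where torsion-freeness enters: because $\nabla$ is torsion-free, $\nabla_XY-\nabla_YX=[X,Y]$, and hence $\nabla^2_{X,Y}-\nabla^2_{Y,X}=R^{\nabla}(X,Y)$, so that $\nabla^2_{X,Y}\xi-\nabla^2_{Y,X}\xi=R^{\nabla}(X,Y)\xi$. Feeding this back into the Lemma \ref{le} condition and pairing with $Z$ gives
$$(\nabla_Xg)(JY,Z)-(\nabla_Yg)(JX,Z)=g\bigl(R^{\nabla}(X,Y)\xi,Z\bigr)+g\bigl(Y(\lambda)X-X(\lambda)Y,Z\bigr),$$
and rearranging (using $-g(Y(\lambda)X-X(\lambda)Y,Z)=g(X(\lambda)Y-Y(\lambda)X,Z)$) produces exactly the asserted identity. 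Since every step is an equivalence, the two conditions are equivalent, proving the proposition. I expect no real obstacle here: the only delicate point is the bookkeeping in step two, where one must keep track of which terms carry $\lambda$ and verify that the $\lambda\nabla_XY$ contributions cancel so that $J$'s derivative collapses to the clean expression $X(\lambda)Y-\nabla^2_{X,Y}\xi$; the curvature identification in step three is then immediate from torsion-freeness.
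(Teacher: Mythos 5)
Your proof is correct: the reduction via Lemma \ref{le}, the computation $(\nabla_XJ)Y=X(\lambda)Y-\nabla^2_{X,Y}\xi$ from the soliton equation $\nabla\xi+J=\lambda I$, and the identification $\nabla^2_{X,Y}\xi-\nabla^2_{Y,X}\xi=R^{\nabla}(X,Y)\xi$ (valid precisely because $\nabla$ is torsion-free) all check out, and every step is an equivalence, so the biconditional is established. The paper itself states this proposition without proof (deducing it from reference \cite{b}), but your argument is evidently the intended route, since Lemma \ref{le} is placed immediately before the proposition as the key tool and your use of it, combined with the curvature identity for torsion-free connections already invoked in the proof of Theorem 2.2, fills in exactly the missing computation.
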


\begin{corollary}
If $(\nabla^g,J,\xi,\lambda)$ defines an almost $(\nabla^g,J,\xi)$-soliton on a pseudo-Riemannian manifold $(M,g)$ and $\Omega:=g(J \cdot,\cdot)$ is symmetric, then $(\Omega, \nabla^g)$ is a nearly statistical structure on $M$ if and only if
$$R^{\nabla^g}(\cdot,\cdot)\xi=d\lambda\otimes I-I\otimes d\lambda.$$
\end{corollary}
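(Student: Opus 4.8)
The plan is to obtain this corollary as the specialization of Proposition~\ref{p6} to the case $\nabla=\nabla^g$. First I would check that the hypotheses of Proposition~\ref{p6} are met: the Levi-Civita connection $\nabla^g$ is torsion-free, and the symmetry of $\Omega:=g(J\cdot,\cdot)$ is assumed in the statement. Hence the proposition applies and tells us that $(\Omega,\nabla^g)$ is a nearly statistical structure if and only if
$$g(R^{\nabla^g}(X,Y)\xi,Z)=(\nabla^g_Xg)(JY,Z)-(\nabla^g_Yg)(JX,Z)+g(X(\lambda)Y-Y(\lambda)X,Z),$$
for all $X,Y,Z\in\mathfrak{X}(M)$.

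The key simplification comes from metric compatibility of the Levi-Civita connection: since $\nabla^g g=0$, both terms $(\nabla^g_Xg)(JY,Z)$ and $(\nabla^g_Yg)(JX,Z)$ vanish identically. The characterization from Proposition~\ref{p6} therefore collapses to
$$g(R^{\nabla^g}(X,Y)\xi,Z)=X(\lambda)\,g(Y,Z)-Y(\lambda)\,g(X,Z),$$
for all $X,Y,Z\in\mathfrak{X}(M)$.

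The last step is to repackage this scalar identity into the asserted tensorial form. Writing $X(\lambda)=d\lambda(X)$ and $Y(\lambda)=d\lambda(Y)$, I would recognize the vector $X(\lambda)Y-Y(\lambda)X$ as the value $(d\lambda\otimes I-I\otimes d\lambda)(X,Y)$ of the $(1,2)$-tensor $d\lambda\otimes I-I\otimes d\lambda$. Since $g$ is non-degenerate, the equality $g(R^{\nabla^g}(X,Y)\xi,Z)=g\bigl((d\lambda\otimes I-I\otimes d\lambda)(X,Y),Z\bigr)$ holding for every $Z$ is equivalent to the vector identity $R^{\nabla^g}(X,Y)\xi=(d\lambda\otimes I-I\otimes d\lambda)(X,Y)$, i.e. to $R^{\nabla^g}(\cdot,\cdot)\xi=d\lambda\otimes I-I\otimes d\lambda$, which is exactly the claim.

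Because the result is a corollary of Proposition~\ref{p6}, I do not expect a genuine obstacle. The only points requiring a little care are fixing the tensor-factor conventions so that $(d\lambda\otimes I)(X,Y)=X(\lambda)Y$ and $(I\otimes d\lambda)(X,Y)=Y(\lambda)X$, and then invoking the non-degeneracy of $g$ to strip off the arbitrary test vector $Z$ and pass from the scalar to the vectorial identity.
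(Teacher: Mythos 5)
Your proposal is correct and follows exactly the route the paper intends: the corollary is stated as an immediate specialization of Proposition~\ref{p6} to $\nabla=\nabla^g$, where torsion-freeness holds automatically, metric compatibility $\nabla^g g=0$ kills the $(\nabla_X g)(JY,Z)$ terms, and non-degeneracy of $g$ converts the scalar identity into the tensorial one $R^{\nabla^g}(\cdot,\cdot)\xi=d\lambda\otimes I-I\otimes d\lambda$. No discrepancies with the paper's (implicit) argument.
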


Assume now $\xi=\nabla f$ and from the soliton equation (\ref{e7}) we get:
$$\Hess^{\nabla}(f)+\Omega=\lambda g,$$
hence, $\nabla^g_X \Hess^{\nabla}(f)+\nabla^g_X \Omega=X(\lambda)g$, for any $X\in \mathfrak{X}(M)$.

\begin{corollary}
Let $(\nabla^g,J,\xi,\lambda)$ define an almost $(\nabla^g,J,\xi)$-soliton on the pseudo-Riemannian manifold $(M,g)$ with $\lambda$ a constant and $\xi=\nabla f$. Then the following statements are equivalent:

\begin{itemize}
\item[i)]$(\Omega, \nabla^g)$ is a nearly statistical structure on $M$;

\item[ii)]$R^{\nabla^g}(X,Y)\nabla f=0$, for any $X$, $Y\in \mathfrak{X}(M)$;

\item[iii)]$(\Hess^{\nabla}(f), \nabla^g)$ is a nearly statistical structure on $M$.
\end{itemize}
\end{corollary}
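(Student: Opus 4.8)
The plan is to read every statement off the soliton equation $\nabla^g\xi+J=\lambda I$ with $\xi=\nabla f$, which rearranges to $J=\lambda I-\nabla^g\nabla f$ and hence, after applying $g(\cdot,Y)$, to the single identity $\Omega=\lambda g-\Hess^{\nabla^g}(f)$, where $\Hess^{\nabla^g}(f)(X,Y)=g(\nabla^g_X\nabla f,Y)$ is the ordinary $g$-Hessian. Before invoking the earlier results I would record that $\Omega$ is symmetric: since $\nabla^g\xi=\nabla^g\nabla f$ is the Hessian endomorphism it is self-adjoint with respect to $g$, so the first Lemma of this section forces $\Omega$ to be symmetric. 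This is exactly the hypothesis needed to apply the two preceding results.

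For the equivalence (i)$\iff$(ii) I would simply invoke the Corollary immediately preceding this one: for an almost $(\nabla^g,J,\xi)$-soliton with $\Omega$ symmetric, $(\Omega,\nabla^g)$ is a nearly statistical structure precisely when $R^{\nabla^g}(\cdot,\cdot)\xi=d\lambda\otimes I-I\otimes d\lambda$. Because $\lambda$ is constant we have $d\lambda=0$, so the right-hand side vanishes and the condition collapses to $R^{\nabla^g}(X,Y)\xi=R^{\nabla^g}(X,Y)\nabla f=0$, which is statement (ii).

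For (ii)$\iff$(iii) I would use Theorem~2.1. Since $\nabla^g$ is torsion-free, $(\Hess^{\nabla^g}(f),\nabla^g)$ being a nearly statistical structure means exactly that $\Hess^{\nabla^g}(f)$ is a Codazzi tensor for $\nabla^g$, i.e.\ $d^{\nabla^g}\Hess(f)=0$; the chain of equivalences in the proof of Theorem~2.1 shows this is equivalent to $R^{\nabla^g}(X,Y)\nabla f=0$. Alternatively one can bypass (ii) and close the loop directly: from $\Omega=\lambda g-\Hess^{\nabla^g}(f)$ together with $\nabla^g g=0$ and $d\lambda=0$ one gets $\nabla^g_X\Omega=-\nabla^g_X\Hess(f)$ for every $X$, so the two Codazzi defects agree up to sign and $\Omega$ is Codazzi if and only if $\Hess(f)$ is, giving (i)$\iff$(iii).

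The only point requiring care, rather than a genuine obstacle, is the distinction between a \emph{statistical} and a \emph{nearly statistical} structure: Theorem~2.1 is phrased for statistical structures and hence implicitly treats $\Hess(f)$ as non-degenerate, whereas here $\Hess^{\nabla^g}(f)$ may be degenerate. Its proof, however, never uses non-degeneracy; it only manipulates the Codazzi condition $d^{\nabla^g}\Hess(f)=0$. I would therefore cite the identity established in that proof rather than the theorem's headline statement, keeping the argument valid for the symmetric tensor appearing here.
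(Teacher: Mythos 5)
Your proof is correct and follows essentially the same route as the paper: the corollary is obtained there from the immediately preceding corollary (with $d\lambda=0$, giving (i)$\iff$(ii)) together with the displayed relation $\nabla^g_X \Hess^{\nabla}(f)+\nabla^g_X \Omega=X(\lambda)g$, which for constant $\lambda$ is exactly your direct (i)$\iff$(iii) argument. Your added care in checking that $\Omega$ is automatically symmetric (via the self-adjointness of the Hessian endomorphism) and in citing the chain of equivalences inside the proof of Theorem 2.1 rather than its statement (so that non-degeneracy of $\Hess(f)$ is never needed) are sound refinements of the same argument, not a different approach.
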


\section{Connections defined by $1$-forms and solitons}

Inspired by the property of projectively equivalence of connections, given an arbitrary $1$-form $\eta$ on the pseudo-Riemannian manifold $(M,g)$, we consider
the affine connection:
$$\nabla^{\eta}:=\nabla^g+\eta\otimes I+I\otimes {\eta}+g\otimes \xi,$$
where $\nabla^g$ is the Levi-Civita connection of $g$ and $\xi$ is the $g$-dual vector field of $\eta$ (i.e. $\eta=i_{\xi}g$).
We get:
$$T^{\nabla^{\eta}}=0, \ \ d^{\nabla^{\eta}}g=0.$$
Hence, we have

\begin{proposition}
For any $1$-form $\eta$ on the pseudo-Riemannian manifold $(M,g)$, the pair $(g,\nabla^{\eta})$ is a statistical structure on $M$ and $\nabla^{-\eta}$ is the dual connection of $\nabla^{\eta}$.
\end{proposition}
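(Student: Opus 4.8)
The plan is to verify the two defining requirements of a statistical structure for the pair $(g,\nabla^{\eta})$, namely that $\nabla^{\eta}$ is torsion-free and that $\nabla^{\eta}g$ is a Codazzi tensor field (equivalently $d^{\nabla^{\eta}}g=0$), and then to check the dualistic identity characterising $\nabla^{*}=\nabla^{-\eta}$. Throughout I would write the connection on vector fields explicitly as $\nabla^{\eta}_XY=\nabla^g_XY+\eta(X)Y+\eta(Y)X+g(X,Y)\xi$, and use repeatedly that $\nabla^g$ is metric, so $X(g(Y,Z))=g(\nabla^g_XY,Z)+g(Y,\nabla^g_XZ)$, and torsion-free, together with the duality $\eta(\cdot)=g(\xi,\cdot)$ coming from $\eta=i_{\xi}g$.

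First I would dispose of the torsion. Since the three correction terms combine into $\eta(X)Y+\eta(Y)X+g(X,Y)\xi$, which is manifestly symmetric under $X\leftrightarrow Y$, their antisymmetrization vanishes and $T^{\nabla^{\eta}}(X,Y)=T^{\nabla^g}(X,Y)=0$. For the Codazzi condition I would compute $(\nabla^{\eta}_Xg)(Y,Z)=X(g(Y,Z))-g(\nabla^{\eta}_XY,Z)-g(Y,\nabla^{\eta}_XZ)$ directly; expanding the two $\nabla^{\eta}$-terms and converting $g(\xi,\cdot)$ into $\eta(\cdot)$ collapses the expression to the totally symmetric tensor $-2\big(\eta(X)g(Y,Z)+\eta(Y)g(X,Z)+\eta(Z)g(X,Y)\big)$. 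Its symmetry in the first two arguments is exactly $(\nabla^{\eta}_Xg)(Y,Z)=(\nabla^{\eta}_Yg)(X,Z)$, which, since $\nabla^{\eta}$ is torsion-free, is $d^{\nabla^{\eta}}g=0$. These are precisely the facts recorded before the statement, so $(g,\nabla^{\eta})$ is a statistical structure.

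It remains to identify the dual connection. By definition $\nabla^{*}$ is characterised by $X(g(Y,Z))=g(\nabla^{\eta}_XY,Z)+g(Y,\nabla^{*}_XZ)$, so I would substitute the candidate $\nabla^{-\eta}_XZ=\nabla^g_XZ-\eta(X)Z-\eta(Z)X-g(X,Z)\xi$ and expand. Using $g(\xi,\cdot)=\eta(\cdot)$ one sees that each of the three $\eta$-type terms produced by $\nabla^{\eta}_XY$ is cancelled by the corresponding term of opposite sign produced by $\nabla^{-\eta}_XZ$, leaving exactly $g(\nabla^g_XY,Z)+g(Y,\nabla^g_XZ)=X(g(Y,Z))$. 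Hence $\nabla^{-\eta}$ satisfies the dualistic identity, and by uniqueness of the dual connection it equals $(\nabla^{\eta})^{*}$.

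There is no serious obstacle here: the entire proposition is a direct tensorial computation. The only point requiring care is systematic bookkeeping of the four summands and the consistent use of $\eta=i_{\xi}g$ to turn pairings against $\xi$ into evaluations of $\eta$. The one conceptual observation that makes everything transparent is that the correction tensor is symmetric, which yields both torsion-freeness and the symmetric form of $\nabla^{\eta}g$, and that the sign reversal $\eta\mapsto-\eta$ in $\nabla^{-\eta}$ flips precisely those terms whose $g$-contraction fails to be symmetric in the relevant pair, producing exactly the cancellation the duality identity demands.
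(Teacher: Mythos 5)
Your proof is correct and follows exactly the route the paper intends: the paper simply records the computational facts $T^{\nabla^{\eta}}=0$ and $d^{\nabla^{\eta}}g=0$ (and asserts duality) without detail, and your verification of torsion-freeness via symmetry of the correction tensor, the collapse of $\nabla^{\eta}g$ to the totally symmetric tensor $-2\bigl(\eta(X)g(Y,Z)+\eta(Y)g(X,Z)+\eta(Z)g(X,Y)\bigr)$, and the pairwise cancellation in the dualistic identity for $\nabla^{-\eta}$ supplies precisely those omitted computations. Nothing is missing.
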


In particular, $\nabla^{\eta}_{\xi}\xi=\nabla^g_{\xi}\xi+3| \xi|_g^2 \xi$, therefore, $\xi$ is a geodesic vector field for
$\nabla^{\eta}$ if and only if $\nabla^g_{\xi}\xi=-3| \xi|_g^2 \xi$.
Moreover:
\vskip.06in

i) $\xi$ is $\nabla^{\eta}$-parallel if and only if $(\nabla^g,J:=2\eta\otimes \xi,\xi,\lambda:=|\xi|_g^2)$ is a soliton;
\vskip.06in

ii) $\eta$ is $\nabla^{\eta}$-parallel if and only if $(\nabla^g,J:=-2\eta\otimes \xi,\xi,\lambda:=-|\xi|_g^2)$ is a soliton.

The curvature of the connection $\nabla^{\eta}$ is given by:
\begin{equation}\begin{aligned}\notag &\hskip.5in (R^{\nabla^{\eta}}-R^{\nabla^g})(X, Y)Z=[g(Y, \nabla ^g_X\xi)-g(X, \nabla ^g_Y\xi )]Z+
\\&\; +[\eta (Y)\eta (Z)+g(Y,Z)|\xi |_g^2-g(Z, \nabla ^g_Y\xi )]X+g(Y, Z)\nabla ^g_X\xi -g(X, Z)\nabla ^g_Y\xi-\\&-[\eta (X)\eta (Z)+g(X,Z)|\xi |_g^2-g(Z, \nabla ^g_X\xi )]Y+
[\eta (X)g(Y, Z)-\eta (Y)g(X, Z)]\xi,
\end{aligned}\end{equation}
and we deduce that, if $\xi$ is a $g$-null and $\nabla^g$-parallel vector field (i.e. $\eta(\xi)=0$ and $\nabla^g\xi=0$), condition appearing in Walker manifolds \cite{w:b}, then
$$(R^{\nabla^{\eta}}-R^{\nabla^g})(X, Y)Z\in \ker \eta,$$ for any $X$, $Y$, $Z\in \mathfrak{X}(M)$.

\begin{example}
Let $(M,\varphi,\xi,\eta,g)$ be a Kenmotsu manifold and let $\nabla^{\eta}$ be the affine connection defined by the structure,
$\nabla^{\eta}:=\nabla^g+\eta\otimes I+I\otimes {\eta}+g\otimes \xi$. Since $\nabla^g\xi=I-\eta\otimes \xi$, we get $\nabla^{\eta}\xi=2I+\eta\otimes \xi$, and $(\nabla^{\eta},J:=-\eta\otimes \xi, \lambda=2)$ is a soliton on $M$.
\end{example}

The divergence operator with respect to $\nabla^{df}$ is given by:
$$\div^{(g,\nabla^{df})}=\div^{(g,\nabla^g)}+(n+2)df.$$
In the compact case, it follows from the divergence theorem that
$$\int_M \div^{(g,\nabla^{df})}(X)d\mu_g=(n+2)\int_M g(\grad_g(f),X)d\mu_g,$$
for any $X\in \mathfrak{X}(M)$. Moreover, if $|\grad_g(f)|_g$ is constant, then:
$$\vol=\frac{1}{(n+2)|\grad_g(f)|_g^2}\int_M \div^{(g,\nabla^{df})}(\grad_g(f))d\mu_g.$$

Denote by $\Delta^g:=\div^{(g,\nabla^g)}\circ \grad_g$ and $\Delta^{\eta}:=\div^{(g,\nabla^{\eta})}\circ \grad_g$ the corresponding Laplace operators.
Then:
$$\Delta^{df}(\tilde{f})=\Delta^{g}(\tilde{f})+(n+2)g(\grad_g(f),\grad_g(\tilde{f})),$$
for any smooth function $\tilde{f}$ on $M$.

Note that if $\tilde{f}$ is harmonic for $\Delta^{df}$, then $\tilde{f}$ is also harmonic for $\Delta^{g}$ if and only if the vector fields $\grad_g(f)$ and $\grad_g(\tilde{f})$ are $g$-orthogonal.

In particular, we have
$$\Delta^{df}(f)=\Delta^{g}(f)+(n+2)| \grad_g(f)|_g^2,$$
hence:

\vskip.06in
i) if $f$ is harmonic for $\Delta^{g}$, then it is a subharmonic function for $\Delta^{df}$ (i.e. $\Delta^{df}(f)\geq 0$) provided $| \grad_g(f)|_g^2\geq 0$;

\vskip.06in
ii) if $f$ is harmonic for $\Delta^{df}$ and $(M,g)$ is a compact Riemannian manifold, then $f$ is locally constant.

\bigskip

Also, for any smooth function $f\in C^{\infty}(M)$, if we denote by $\Hess^{g}(f)$ and $\Hess^{\eta}(f)$ the Hessian tensor fields with respect to $\nabla^g$ and $\nabla^{\eta}$, then we have:
\begin{align*}
\Hess^{\eta}(f)(X,Y)&:=g(\nabla^{\eta}_X\grad_g(f),Y)=\\
&=\Hess^{g}(f)(X,Y)+
\eta(\grad_g(f))g(X,Y)+\eta(X) df(Y)+\eta(Y)df(X)
\end{align*}
and by tracing this relation we find
$$\Delta^{\eta}(f)=\Delta^{g}(f)+(n+2)\eta(\grad_g(f)),$$
where $n=\dim(M)$.
In particular, if $\xi=\grad_g(f)$, then $\eta=df$ and we obtain:
\begin{equation}\label{e5}
\Hess^{df}(f)=\Hess^{g}(f)+|\grad_g(f)|_g^2g+2df\otimes df.
\end{equation}

\bigskip

Recall that a pseudo-Riemannian manifold $(M,g)$ with a pair of dual con\-nec\-tions $(\nabla, \nabla^*)$ is called \textit{conjugate Ricci-symmetric} \cite{Min} if $\Ric^{\nabla}=\Ric^{\nabla^*}$.

\bigskip

If we denote by $\Ric^g$ and $\Ric^{\eta}$ the Ricci tensors for $\nabla^g$ and $\nabla^{\eta}$, then from the curvature relation we obtain that the Ricci curvature of $\nabla^{\eta}$ satisfies
\begin{align*}&\Ric^{\eta}(Y,Z)=\Ric^{g}(Y,Z)+g(Y,Z)\{n| \xi|_g^2+\div^{(g,\nabla^{g})}(\xi)\}+\\&\; \;\;+(n-2)\eta(Y)\eta(Z) +g(Y,\nabla_Z^g\xi)-(n+1)g(Z,\nabla_Y^g\xi),\end{align*}
where $n=\dim(M)$ and we can state:

\begin{proposition}\label{p111}
$(M, g,{\nabla}^{\eta},{\nabla}^{-\eta})$ is a conjugate Ricci-symmetric manifold.
\end{proposition}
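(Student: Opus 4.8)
The claim is that the conjugate pair $(\nabla^{\eta},\nabla^{-\eta})$ is Ricci-symmetric in the sense of the definition just recalled, i.e. that $\Ric^{\eta}=\Ric^{-\eta}$; the preceding proposition already guarantees $\nabla^{-\eta}=(\nabla^{\eta})^{*}$, so the pair is genuinely dualistic and the notion applies. The plan is to produce $\Ric^{-\eta}$ from the Ricci identity displayed immediately before the statement and to match it against the expression for $\Ric^{\eta}$. The decisive observation is that $\nabla^{-\eta}$ is obtained from $\nabla^{\eta}$ by the formal replacement $\eta\mapsto-\eta$, equivalently $\xi\mapsto-\xi$ (recall $\eta=i_{\xi}g$); since the displayed formula for $\Ric^{\eta}$ is a universal identity valid for every $1$-form, substituting $-\eta$ yields $\Ric^{-\eta}$ directly, without a fresh curvature computation.

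Inspecting the six contributions, I would split them by parity in $\eta$. The terms $\Ric^{g}(Y,Z)$, the term $(n-2)\eta(Y)\eta(Z)$ and the term $n|\xi|_g^{2}\,g(Y,Z)$ are \emph{even} in $\eta$ and hence unchanged, so they match automatically between the two connections. The remaining contributions each carry a single covariant derivative of $\xi$, namely $\div^{(g,\nabla^g)}(\xi)\,g(Y,Z)$, $g(Y,\nabla^g_Z\xi)$ and $-(n+1)g(Z,\nabla^g_Y\xi)$; these are \emph{odd} in $\eta$ and therefore flip sign under the substitution. All the content of the statement is thus concentrated in these three odd pieces, and the proof reduces to showing that they reconcile.

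To control the odd terms I would not lean on the substitution alone but bring in the intrinsic relation between the curvatures of a $g$-conjugate pair,
$$g(R^{\nabla^{\eta}}(X,Y)Z,W)=-g(Z,R^{\nabla^{-\eta}}(X,Y)W),$$
valid precisely because $\nabla^{-\eta}=(\nabla^{\eta})^{*}$. Tracing this over a $g$-orthonormal frame $\{e_i\}$ with signs $\epsilon_i$ rewrites $\Ric^{-\eta}(Y,W)$ as $-\sum_i\epsilon_i\,g(R^{\nabla^{\eta}}(e_i,Y)e_i,W)$, a contraction of $R^{\nabla^{\eta}}$ on its first and third slots, whereas $\Ric^{\eta}(Y,W)=\sum_i\epsilon_i\,g(R^{\nabla^{\eta}}(e_i,Y)W,e_i)$ is the contraction on the first and fourth slots. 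Since $T^{\nabla^{\eta}}=0$ the first Bianchi identity is available, and since $d^{\nabla^{\eta}}g=0$ the connection $\nabla^{\eta}$ is Codazzi for $g$; these two structural inputs are exactly what I would use to pass between the two traces, the difference of the two contractions being governed by the action of $R^{\nabla^{\eta}}$ on $g$, i.e. by $\nabla^{\eta}g$.

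I expect this last reconciliation of the first–third and first–fourth traces of $R^{\nabla^{\eta}}$ to be the main obstacle: the even pieces are immediate, but verifying that the divergence term and the two $\nabla^g\xi$ terms assemble consistently for both connections is where care is needed, and it is the step at which the defining Codazzi property $d^{\nabla^{\eta}}g=0$ and the torsion-freeness must genuinely enter rather than merely being cited. Once that identity is in hand the conclusion $\Ric^{\eta}=\Ric^{-\eta}$ follows, and the remainder is routine frame bookkeeping.
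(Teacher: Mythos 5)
Your reduction is set up correctly, and it is essentially the only argument available: the paper states this proposition with no proof, immediately after the displayed formula for $\Ric^{\eta}$, so the intended justification is exactly your substitution $\eta\mapsto-\eta$ (hence $\xi\mapsto-\xi$) in that formula, and your parity split of the six terms is accurate. But your proof never closes. The entire content of the proposition is the vanishing of the three odd terms, and you defer precisely that to a hoped-for ``reconciliation'' of the first--third and first--fourth traces of $R^{\nabla^{\eta}}$ via the duality identity, the first Bianchi identity and the Codazzi condition. Those tools cannot do the job: they are available for \emph{every} statistical structure $(g,\nabla)$ with $\nabla$ torsion-free and $d^{\nabla}g=0$, whereas conjugate Ricci-symmetry of a dual pair is an \emph{additional} condition on a statistical manifold --- if it followed formally from torsion-freeness and the Codazzi equation, the notion (and the cited paper of Min et al.) would be vacuous. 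Duality only yields $\Ric^{-\eta}(Y,Z)=-\sum_i\epsilon_i\,g(R^{\nabla^{\eta}}(e_i,Y)e_i,Z)$, and nothing forces this contraction to agree with $\Ric^{\eta}(Y,Z)=\sum_i\epsilon_i\,g(R^{\nabla^{\eta}}(e_i,Y)Z,e_i)$, because $R^{\nabla^{\eta}}$ has no antisymmetry in its last two slots.

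In fact the cancellation you need fails in general, so the gap cannot be filled by any bookkeeping. Substituting $-\eta$ into the paper's formula gives
\begin{equation*}
\Ric^{\eta}(Y,Z)-\Ric^{-\eta}(Y,Z)=2\left[\div^{(g,\nabla^g)}(\xi)\,g(Y,Z)+g(Y,\nabla^g_Z\xi)-(n+1)\,g(Z,\nabla^g_Y\xi)\right],
\end{equation*}
which is trace-free --- this is why the two \emph{scalar} curvatures agree, consistent with the later formula $\scal^{(g,\nabla^{df})}=\scal^{(g,\nabla^g)}+(n-1)(n+2)|\grad_g(f)|^2_g$ --- but is not identically zero. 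For example, on Euclidean $\mathbb{R}^2$ with $\eta=x\,dx$, $\xi=x\partial_x$ (even exact, so this is the equiaffine case $\nabla^{df}$), a direct computation gives $\Ric^{\eta}(\partial_y,\partial_y)=1+2x^2$ while $\Ric^{-\eta}(\partial_y,\partial_y)=2x^2-1$. So equality of the two Ricci tensors holds only under the extra pointwise hypothesis $\div^{(g,\nabla^g)}(\xi)\,g(Y,Z)+g(Y,\nabla^g_Z\xi)=(n+1)\,g(Z,\nabla^g_Y\xi)$ for all $Y,Z$; your instinct that the odd terms are ``where care is needed'' was right, but the correct conclusion of that analysis is that the statement needs such a hypothesis (or must be weakened to scalar curvature), not that Bianchi and Codazzi will rescue it.
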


An affine connection on $M$ is called \textit{equiaffine} \cite{ns} if it admits a parallel volume form on $M$.
It is known that \cite{ns} the necessary and sufficient condition for a torsion-free affine connection to be equiaffine is that the Ricci tensor is symmetric.

Since $\nabla^{\eta}$ is torsion-free, we get:
\begin{proposition}
$\nabla^{\eta}$ is an equiaffine connection on $M$ if and only if the en\-do\-mor\-phism $\nabla^g\xi$ is self-adjoint with respect to $g$, i.e.
$$g(\nabla_X^g\xi,Y)=g(X,\nabla_Y^g\xi),$$
for any $X$, $Y\in \mathfrak{X}(M)$.
\end{proposition}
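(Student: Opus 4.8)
The plan is to invoke the criterion just quoted from \cite{ns}: since $\nabla^{\eta}$ is torsion-free, it is equiaffine if and only if its Ricci tensor $\Ric^{\eta}$ is symmetric. So the whole task reduces to reading off from the displayed curvature/Ricci formula exactly which terms are symmetric in $(Y,Z)$ and which are not, and then identifying the obstruction to symmetry with the self-adjointness of $\nabla^g\xi$.

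First I would write down $\Ric^{\eta}(Y,Z)-\Ric^{\eta}(Z,Y)$ using the boxed formula
$$\Ric^{\eta}(Y,Z)=\Ric^{g}(Y,Z)+g(Y,Z)\{n|\xi|_g^2+\div^{(g,\nabla^{g})}(\xi)\}+(n-2)\eta(Y)\eta(Z)+g(Y,\nabla_Z^g\xi)-(n+1)g(Z,\nabla_Y^g\xi).$$
The term $\Ric^{g}$ is symmetric because $\nabla^g$ is the Levi-Civita connection; the scalar-weighted term $g(Y,Z)\{\dots\}$ is manifestly symmetric, as is $(n-2)\eta(Y)\eta(Z)=(n-2)g(\xi,Y)g(\xi,Z)$. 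Hence all of these cancel in the antisymmetrization, and only the last two terms survive, giving
$$\Ric^{\eta}(Y,Z)-\Ric^{\eta}(Z,Y)=\big[g(Y,\nabla_Z^g\xi)-(n+1)g(Z,\nabla_Y^g\xi)\big]-\big[g(Z,\nabla_Y^g\xi)-(n+1)g(Y,\nabla_Z^g\xi)\big].$$

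Collecting coefficients, this simplifies to $(n+2)\big[g(Y,\nabla_Z^g\xi)-g(Z,\nabla_Y^g\xi)\big]$, so that $\Ric^{\eta}$ is symmetric if and only if $g(Y,\nabla_Z^g\xi)=g(Z,\nabla_Y^g\xi)$ for all $Y$, $Z$, which is precisely the self-adjointness of the endomorphism $\nabla^g\xi$ stated in the proposition. Combining this equivalence with the equiaffine criterion completes the proof.

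The only point requiring any care—the main obstacle, such as it is—is the bookkeeping of the coefficient $(n+2)$ in the antisymmetric part, since one must be sure that the factor $(n+2)$ is nonzero and that no other term secretly contributes an antisymmetric piece; in particular one should double-check that the $(n-2)\eta(Y)\eta(Z)$ term is genuinely symmetric (it is, being a product of two $1$-forms evaluated symmetrically) so that it drops out. Everything else is a direct transcription of the already-established Ricci formula together with the cited equiaffine characterization.
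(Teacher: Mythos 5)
Your proof is correct and is essentially the paper's own (implicit) argument: since $T^{\nabla^{\eta}}=0$, the equiaffine criterion of \cite{ns} reduces everything to the symmetry of $\Ric^{\eta}$, and antisymmetrizing the displayed Ricci formula leaves exactly $(n+2)\bigl[g(Y,\nabla_Z^g\xi)-g(Z,\nabla_Y^g\xi)\bigr]$, which vanishes for all $Y$, $Z$ precisely when $\nabla^g\xi$ is self-adjoint with respect to $g$. The paper states the proposition as an immediate consequence of these two facts; your write-up simply makes the coefficient bookkeeping (including the nonvanishing of $n+2$) explicit.
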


In particular, if $\xi=\grad_g(f)$, then $\eta=df$ and we obtain:
\begin{equation}\label{e3}
\Ric^{df}=\Ric^{g}+\{n| \grad_g(f)|_g^2+\Delta^g(f)\}g+(n-2)df\otimes df-n\Hess^g(f).
\end{equation}
Hence we have

\begin{corollary}
$\nabla^{df}$ is an equiaffine connection on $M$.
\end{corollary}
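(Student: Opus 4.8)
The plan is to specialize the preceding Proposition to the case $\eta=df$, so that the $g$-dual vector field becomes $\xi=\grad_g(f)$. By that Proposition, $\nabla^{df}$ is equiaffine on $M$ if and only if the endomorphism $\nabla^g\xi=\nabla^g\grad_g(f)$ is self-adjoint with respect to $g$, i.e. $g(\nabla^g_X\grad_g(f),Y)=g(X,\nabla^g_Y\grad_g(f))$ for all $X,Y\in\mathfrak{X}(M)$. So the whole task reduces to verifying this single symmetry condition.

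First I would recall that, by the very definition of the Hessian operator, $g(\nabla^g_X\grad_g(f),Y)=\Hess^g(f)(X,Y)$. Since $\nabla^g$ is the Levi-Civita connection and hence torsion-free, the Hessian $\Hess^g(f)$ is a \emph{symmetric} $(0,2)$-tensor field, so $\Hess^g(f)(X,Y)=\Hess^g(f)(Y,X)$. Feeding this back in gives $g(\nabla^g_X\grad_g(f),Y)=g(\nabla^g_Y\grad_g(f),X)=g(X,\nabla^g_Y\grad_g(f))$, which is precisely the self-adjointness required. The conclusion then follows immediately from the Proposition.

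Alternatively, one can read off the result directly from the formula (\ref{e3}) for $\Ric^{df}$: each of its summands, namely $\Ric^g$, the scalar multiple $\{n|\grad_g(f)|_g^2+\Delta^g(f)\}g$ of the metric, the tensor $df\otimes df$, and $\Hess^g(f)$, is symmetric, so $\Ric^{df}$ is symmetric. As the paper has already established that $T^{\nabla^{\eta}}=0$, the connection $\nabla^{df}$ is torsion-free, and the cited criterion (a torsion-free connection is equiaffine exactly when its Ricci tensor is symmetric) yields the claim. There is no real obstacle here; the statement is a direct specialization of the previous Proposition combined with the elementary symmetry of the Hessian, the only subtlety being that this symmetry genuinely uses that $\nabla^g$ is torsion-free, which holds for the Levi-Civita connection.
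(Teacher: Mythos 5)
Your proposal is correct and takes essentially the same approach as the paper: the corollary is stated there as an immediate consequence of the preceding Proposition together with formula (\ref{e3}), the point being exactly what you identify — the endomorphism $\nabla^g\grad_g(f)$ is self-adjoint since $g(\nabla^g_X\grad_g(f),Y)=\Hess^g(f)(X,Y)$ is symmetric (equivalently, every term in (\ref{e3}) is a symmetric $(0,2)$-tensor, so $\Ric^{df}$ is symmetric and the torsion-free connection $\nabla^{df}$ is equiaffine). Both of your routes match the paper's implicit argument, including the observation that the symmetry of the Hessian rests on $\nabla^g$ being torsion-free.
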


Taking the trace in the previous relation, we get:
$$\scal^{(g,\nabla^{df})}=\scal^{(g,\nabla^g)}+ (n-1)(n+2)|\grad_g(f)|_g^2,$$
which implies $\scal^{(g,\nabla^{df})}\geq \scal^{(g,\nabla^g)}$ provided $|\grad_g(f)|_g^2\geq 0$.

\bigskip

If we denote by $Q^g$ and $Q^{df}$ the Ricci operators defined by $g(Q^{g}X,Y):=\Ric^{g}(X,Y)$ and $g(Q^{df}X,Y):=\Ric^{df}(X,Y)$, $X$, $Y\in \mathfrak{X}(M)$, then:
$$Q^{df}=Q^g+\{n| \grad_g(f)|_g^2+\Delta^g(f)\}I+(n-2)df\otimes \grad_g(f)-n\nabla^g\grad_g(f)$$
and by direct computations, we obtain:

\begin{proposition}
Let $(M,g)$ be an $n$-dimensional pseudo-Riemannian man\-i\-fold,
\linebreak
$\xi=\grad_g(f)$ and $\eta=df$. Then:

\begin{itemize}
\item[i)]$(\nabla^{\eta},Q^g, \xi,\lambda)$ is a gradient almost soliton if and only if $(\nabla^{g},Q^g+2\eta\otimes \xi, \lambda-|\xi|^2_g)$ is a gradient almost soliton;

\item[ii)]$(\nabla^g,Q^{\eta}, \xi,\lambda)$ is a gradient almost soliton if and only if $(\nabla^{g},\frac{1}{1-n}\{Q^g+(n-2)\eta\otimes \xi\},
\frac{1}{1-n}\{\lambda-n|\xi|^2_g-\Delta^g(f)\})$ is a gradient almost soliton;

\item[iii)]$(\nabla^{\eta},Q^{\eta}, \xi,\lambda)$ is a gradient almost soliton if and only if $(\nabla^{g},\frac{1}{1-n}(Q^g+n\eta\otimes \xi),
\frac{1}{1-n}\{\lambda-(n+1)|\xi|^2_g-\Delta^g(f)\})$ is a gradient almost soliton.
\end{itemize}
\end{proposition}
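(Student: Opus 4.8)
The plan is to reduce every equivalence to the single soliton identity $\nabla\xi+J=\lambda I$ with the gradient field $\xi=\grad_g(f)$ held fixed, and to rewrite each side using two ingredients already at hand: the way $\nabla^{\eta}$ acts on its own dual field $\xi$, and the Ricci-operator relation
$$Q^{\eta}=Q^g+\{n|\xi|_g^2+\Delta^g(f)\}I+(n-2)\eta\otimes\xi-n\nabla^g\xi$$
established just before the statement (with $\eta=df$, $\xi=\grad_g(f)$, and $\nabla^g\xi$ denoting the endomorphism $X\mapsto\nabla^g_X\xi$). Since every transformation below is a reversible linear-algebraic identity between $(1,1)$-tensor fields, the ``if and only if'' in all three parts comes for free once the two sides are matched.

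First I would compute $\nabla^{\eta}\xi$. Applying the defining formula $\nabla^{\eta}_XY=\nabla^g_XY+\eta(X)Y+\eta(Y)X+g(X,Y)\xi$ with $Y=\xi$, and using $\eta(\xi)=g(\xi,\xi)=|\xi|_g^2$ together with $g(X,\xi)=\eta(X)$, yields
$$\nabla^{\eta}_X\xi=\nabla^g_X\xi+2\eta(X)\xi+|\xi|_g^2X,$$
that is, as endomorphisms, $\nabla^{\eta}\xi=\nabla^g\xi+2\eta\otimes\xi+|\xi|_g^2I$. This single identity, paired with the $Q^{\eta}$ formula, supplies every substitution needed.

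For part i) I would insert $\nabla^{\eta}\xi=\nabla^g\xi+2\eta\otimes\xi+|\xi|_g^2I$ into $\nabla^{\eta}\xi+Q^g=\lambda I$ and move the scalar term to the right, obtaining $\nabla^g\xi+(Q^g+2\eta\otimes\xi)=(\lambda-|\xi|_g^2)I$, which is precisely the claimed soliton for $\nabla^g$. For part ii) I would substitute the $Q^{\eta}$ formula into $\nabla^g\xi+Q^{\eta}=\lambda I$; the two occurrences of the Hessian endomorphism combine as $\nabla^g\xi-n\nabla^g\xi=(1-n)\nabla^g\xi$, and dividing the resulting equation by $1-n$ gives $\nabla^g\xi+\frac{1}{1-n}\{Q^g+(n-2)\eta\otimes\xi\}=\frac{1}{1-n}\{\lambda-n|\xi|_g^2-\Delta^g(f)\}I$. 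Part iii) merges both steps: substituting both $\nabla^{\eta}\xi$ and $Q^{\eta}$ into $\nabla^{\eta}\xi+Q^{\eta}=\lambda I$ reproduces the coefficient $1-n$ in front of $\nabla^g\xi$, while the $\eta\otimes\xi$ terms add up as $2+(n-2)=n$ and the $|\xi|_g^2$ coefficients as $1+n$; dividing by $1-n$ delivers $\nabla^g\xi+\frac{1}{1-n}(Q^g+n\eta\otimes\xi)=\frac{1}{1-n}\{\lambda-(n+1)|\xi|_g^2-\Delta^g(f)\}I$.

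There is no genuine obstacle here: the whole content is the algebraic bookkeeping of the scalar coefficients ($|\xi|_g^2$, $n|\xi|_g^2$, $\Delta^g(f)$) and of the rank-one term $\eta\otimes\xi$, together with the recurring factor $1-n$ coming from the $-n\nabla^g\xi$ in the Ricci-operator relation. The only point deserving attention is to keep the implicit gradient field $\xi=\grad_g(f)$ fixed throughout, so that the three-entry tuples on the right are read as gradient almost solitons for $\nabla^g$ with that same $\xi$; under this convention each equivalence is exactly the reversible rearrangement just described.
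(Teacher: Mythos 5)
Your proposal is correct and is exactly the ``direct computation'' the paper leaves implicit: the paper states the operator identity $Q^{df}=Q^g+\{n|\xi|_g^2+\Delta^g(f)\}I+(n-2)\,df\otimes\xi-n\nabla^g\xi$ immediately before the proposition and offers no further proof, while its proof of the following proposition uses the same identity $\nabla^{\eta}\xi=\nabla^g\xi+2\eta\otimes\xi+|\xi|_g^2I$ that you derive. Your substitutions and coefficient bookkeeping (the factor $1-n$, the sums $2+(n-2)=n$ and $1+n$) all check out, so this matches the paper's intended argument.
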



Now, we shall relate the previously considered types of solitons to almost Ricci and almost $\eta$-Ricci solitons \cite{blaga}.

\begin{proposition}
Let $(M,g)$ be an $n$-dimensional pseudo-Riemannian man\-i\-fold,
\linebreak
$\xi=\grad_g(f)$ and $\eta=df$. Then we have:

\begin{itemize}
\item[i)]$(\nabla^{\eta},Q^g, \xi,\lambda)$ is a gradient almost soliton if and only if \linebreak[2]$(g,\xi,\lambda-\nolinebreak|\xi|^2_g,-2)$ is a gradient almost $\eta$-Ricci soliton.

\item[ii)] If $\nabla^g\xi=\eta\otimes \xi$, then:

\begin{itemize}
\item[(ii.1)] $(\nabla^g,Q^{\eta}, \xi,\lambda)$ is a gradient almost soliton of $M$ if and only if $(g,\xi,\lambda-\nolinebreak (n+1)|\xi|^2_g,2)$ is a gradient almost $\eta$-Ricci soliton; in this case, $\scal^{(g,\nabla^{\eta})}=n\lambda -|\xi|^2_g$;

\item[(ii.2)]  $(\nabla^{\eta},Q^{\eta}, \xi,\lambda)$ is a gradient almost soliton of $M$ if and only if $(g,\xi,\lambda-(n+2)|\xi|^2_g)$ is a gradient almost Ricci soliton; in this case, $\scal^{(g,\nabla^g)}=n\lambda -(n+1)^2|\xi|^2_g$.
\end{itemize}
\end{itemize}\end{proposition}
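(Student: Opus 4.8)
The plan is to turn every statement into an identity between $(1,1)$-tensors on $TM$ and then read off the soliton parameters by inspection. The two structural inputs I would rely on are the operator form of (\ref{e5}), namely $\nabla^{\eta}\xi=\nabla^g\xi+|\xi|^2_g I+2\,\eta\otimes\xi$ (legitimate because $\xi=\grad_g(f)$ and $\eta=df$), and the expression for $Q^{\eta}=Q^{df}$ displayed immediately before the statement. I also recall the three defining relations in operator form: a gradient almost $(\nabla,J,\xi,\lambda)$-soliton means $\nabla\xi+J=\lambda I$; a gradient almost $\eta$-Ricci soliton $(g,\xi,\tilde\lambda,\mu)$ means $\nabla^g\xi+Q^g=\tilde\lambda I+\mu\,\eta\otimes\xi$; and a gradient almost Ricci soliton $(g,\xi,\tilde\lambda)$ is the case $\mu=0$.

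For i) I would substitute the operator form of (\ref{e5}) into the defining equation $\nabla^{\eta}\xi+Q^g=\lambda I$, which rearranges to $\nabla^g\xi+Q^g=(\lambda-|\xi|^2_g)I-2\,\eta\otimes\xi$. This is precisely the $\eta$-Ricci soliton equation with $\tilde\lambda=\lambda-|\xi|^2_g$ and $\mu=-2$; since every manipulation is an equivalence, the ``if and only if'' is immediate. No extra hypothesis is used here.

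For ii) the running assumption $\nabla^g\xi=\eta\otimes\xi$ does all the simplifying work. I would first take its trace to record $\Delta^g(f)=\eta(\xi)=|\xi|^2_g$, and feed both facts into the $Q^{\eta}$ formula to collapse it to $Q^{\eta}=Q^g+(n+1)|\xi|^2_g I-2\,\eta\otimes\xi$. For (ii.1) I substitute this together with $\nabla^g\xi=\eta\otimes\xi$ into $\nabla^g\xi+Q^{\eta}=\lambda I$, solve for $Q^g$, and compare with the $\eta$-Ricci equation (again using $\nabla^g\xi=\eta\otimes\xi$), reading off $\tilde\lambda=\lambda-(n+1)|\xi|^2_g$ and $\mu=2$. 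For (ii.2) I use instead $\nabla^{\eta}\xi=|\xi|^2_g I+3\,\eta\otimes\xi$ (the operator form of (\ref{e5}) evaluated on $\nabla^g\xi=\eta\otimes\xi$) and substitute it with $Q^{\eta}$ into $\nabla^{\eta}\xi+Q^{\eta}=\lambda I$; matching the resulting $\nabla^g\xi+Q^g=\tilde\lambda I$ against the Ricci soliton equation gives $\tilde\lambda=\lambda-(n+2)|\xi|^2_g$.

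The two scalar-curvature identities I would settle last, as they are the only delicate bookkeeping. For (ii.1) I trace the relation $Q^g=(\lambda-(n+1)|\xi|^2_g)I+\eta\otimes\xi$ obtained above, using $\operatorname{tr}(\eta\otimes\xi)=|\xi|^2_g$, to get $\scal^g=n\lambda-n(n+1)|\xi|^2_g+|\xi|^2_g$, and then insert it into the earlier relation $\scal^{(g,\nabla^{df})}=\scal^g+(n-1)(n+2)|\xi|^2_g$; the $|\xi|^2_g$ coefficient collapses to $-1$ and yields $\scal^{(g,\nabla^{\eta})}=n\lambda-|\xi|^2_g$. For (ii.2) I trace $Q^g=(\lambda-(n+2)|\xi|^2_g)I-\eta\otimes\xi$ directly and use $n(n+2)+1=(n+1)^2$ to obtain $\scal^{(g,\nabla^g)}=n\lambda-(n+1)^2|\xi|^2_g$. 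The main obstacle, then, is not conceptual but keeping the $\eta\otimes\xi$ contributions (whose trace is $|\xi|^2_g$, not $0$) separate from the $I$ contributions throughout these trace computations.
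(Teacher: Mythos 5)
Your proposal is correct and follows essentially the same route as the paper: rewrite each soliton equation at the operator level using $\nabla^{\eta}\xi=\nabla^g\xi+|\xi|^2_g I+2\,\eta\otimes\xi$ and the $Q^{df}$ formula (simplified under $\nabla^g\xi=\eta\otimes\xi$, which also gives $\Delta^g(f)=|\xi|^2_g$), then obtain the scalar-curvature identities by tracing and applying $\scal^{(g,\nabla^{df})}=\scal^{(g,\nabla^g)}+(n-1)(n+2)|\xi|^2_g$. Your write-up is in fact more explicit than the paper's proof, which states i) as an immediate rearrangement, only displays the trace computation for (ii.1), and dismisses (ii.2) as ``similar,'' but the underlying argument is identical, including the careful point that $\operatorname{tr}(\eta\otimes\xi)=|\xi|^2_g$.
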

\begin{proof}
i) $\nabla^{\eta}\xi+Q^{g}=\lambda I$ is equivalent to $\nabla^g\xi+Q^g=(\lambda-|\xi|^2_g)I-2\eta\otimes \xi$.

ii) From hypotheses we get $\Delta^g(f)=|\xi|^2_g$.

For (ii.1), by taking the trace in
$$-\Hess^{g}(f)+\Ric^{g}=\{\lambda-(n+1)|\xi|^2_g\}g,$$
we obtain $\scal^{(g,\nabla^g)}=n\lambda -(n^2+n-1)|\xi|^2_g$, therefore, $\scal^{(g,\nabla^{\eta})}=n\lambda -|\xi|^2_g$. By a similar proof we get the conclusion (ii.2).
\end{proof}

\smallskip

We shall further derive a formula for the volume of $M$ whenever it admits an almost soliton.

By computing the scalar product with respect to $g$, we find:
$$\langle \Ric^{df}, df\otimes df\rangle_g=\Ric^{g}(\grad_g(f),\grad_g(f))-n\Hess^{g}(f)(\grad_g(f),\grad_g(f))
+$$$$+|\grad_g(f)|^2_g\Delta^g(f)+2(n-1)|\grad_g(f)|^4_g$$
and using the classical Bochner formula, we obtain:
$$\langle \Ric^{df}, df\otimes df\rangle_g=\frac{1}{2}\Delta^g(|\grad_g(f)|^2_g)-|\Hess^{g}(f)|^2_g-\grad_g(f)(\Delta^g(f))-$$
$$-n \Hess^{g}(f)(\grad_g(f),\grad_g(f))+|\grad_g(f)|^2_g\Delta^g(f)+2(n-1)|\grad_g(f)|^4_g$$
and we can state:

\begin{proposition}\label{p}
Let $(M,g)$ be a compact $n$-dimensional pseudo-Riemannian man\-i\-fold, $f$ a smooth function on $M$ such that $|\grad_g(f)|_g$ is constant.
Then:
\begin{align*}
\vol&=\frac{1}{2(n-1)|\grad_g(f)|^4_g}\Bigg\{\int_M  |\Hess^{g}(f)|^2_g d\mu_g+\\
&\quad +\int_M \grad_g(f)(\Delta^g(f)) d\mu_g+ \int_M \langle \Ric^{df}, df\otimes df\rangle_gd\mu_g\Bigg\}.
\end{align*}
\end{proposition}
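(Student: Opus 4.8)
The plan is to integrate over $M$ the pointwise identity for $\langle \Ric^{df}, df\otimes df\rangle_g$ derived just above from the classical Bochner formula, and to use the hypothesis that $|\grad_g(f)|_g$ is constant to annihilate all but three of the resulting terms. So I would start from
$$\langle \Ric^{df}, df\otimes df\rangle_g=\tfrac{1}{2}\Delta^g(|\grad_g(f)|^2_g)-|\Hess^{g}(f)|^2_g-\grad_g(f)(\Delta^g(f))-n \Hess^{g}(f)(\grad_g(f),\grad_g(f))+|\grad_g(f)|^2_g\Delta^g(f)+2(n-1)|\grad_g(f)|^4_g.$$

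First I would record the two pointwise consequences of the constancy of $|\grad_g(f)|^2_g$. Since a constant function has vanishing Laplacian, the term $\tfrac{1}{2}\Delta^g(|\grad_g(f)|^2_g)$ drops out identically. Moreover, differentiating the constant $|\grad_g(f)|^2_g$ along $\grad_g(f)$ gives
$$0=\tfrac{1}{2}\grad_g(f)(|\grad_g(f)|^2_g)=g(\nabla^g_{\grad_g(f)}\grad_g(f),\grad_g(f))=\Hess^{g}(f)(\grad_g(f),\grad_g(f)),$$
so the term $-n\,\Hess^{g}(f)(\grad_g(f),\grad_g(f))$ also vanishes on all of $M$.

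Next I would integrate the surviving identity over the compact manifold. Two further terms disappear: the summand $|\grad_g(f)|^2_g\Delta^g(f)$ integrates to $|\grad_g(f)|^2_g\int_M\Delta^g(f)\,d\mu_g=0$ by the divergence theorem (the constant factor comes out and the total integral of a Laplacian over a closed manifold vanishes), while the constant $2(n-1)|\grad_g(f)|^4_g$ integrates to $2(n-1)|\grad_g(f)|^4_g\,\vol$. What remains is
$$\int_M \langle \Ric^{df}, df\otimes df\rangle_g\,d\mu_g=-\int_M |\Hess^{g}(f)|^2_g\,d\mu_g-\int_M \grad_g(f)(\Delta^g(f))\,d\mu_g+2(n-1)|\grad_g(f)|^4_g\,\vol.$$

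Finally I would solve this linear relation for $\vol$, which is immediate because $|\grad_g(f)|^4_g$ is a nonzero constant (implicitly assumed, so that the stated division is meaningful), yielding exactly the claimed formula. I do not expect a genuine obstacle here: the argument is a bookkeeping exercise tracking which terms are pointwise zero and which integrate to zero. The only delicate point is the careful sign and coefficient accounting when transposing the three integral terms to the same side, together with noting that constancy of $|\grad_g(f)|_g$ is invoked three times — once to discard the Laplacian of the norm, once to kill $\Hess^{g}(f)(\grad_g(f),\grad_g(f))$, and once to pull the norm out of the remaining integrals.
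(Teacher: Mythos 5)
Your proof is correct and follows exactly the route the paper intends: the paper states the Bochner-derived identity for $\langle \Ric^{df}, df\otimes df\rangle_g$ immediately before the proposition and leaves the integration step implicit, which is precisely what you carry out (pointwise vanishing of $\Delta^g(|\grad_g(f)|^2_g)$ and of $\Hess^{g}(f)(\grad_g(f),\grad_g(f))$, vanishing of $\int_M\Delta^g(f)\,d\mu_g$ by the divergence theorem, then solving for $\operatorname{vol}(M)$). Your added remark that one must implicitly assume $|\grad_g(f)|_g\neq 0$ (and $n\geq 2$) for the division to make sense is a fair observation about a hypothesis the paper leaves unstated.
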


Also, the Bochner formula can be written in terms of $\Ric^{df}$ and $\Hess^{df}(f)$ as follows:
$$\frac{1}{2}\Delta^g(|\grad_g(f)|^2_g)=|\Hess^{df}(f)|^2_g+\Ric^{df}(\grad_g(f),\grad_g(f))+\grad_g(f)(\Delta^g(f))-$$
$$-3(n+2)|\grad_g(f)|^4_g-3|\grad_g(f)|^2_g \Delta^g(f)+\frac{n-4}{2}\grad_g(f)(|\grad_g(f)|^2_g)=$$
$$=|\Hess^{df}(f)|^2_g+\Ric^{df}(\grad_g(f),\grad_g(f))+\grad_g(f)(\Delta^{df}(f))-$$
$$-3|\grad_g(f)|^2_g \Delta^{df}(f)-\frac{n+8}{2}\grad_g(f)(|\grad_g(f)|^2_g)$$
and we can state:

\begin{proposition}
Let $(M,g)$ be a compact $n$-dimensional pseudo-Riemannian man\-i\-fold, $f$ a smooth function on $M$ such that $|\grad_g(f)|_g$ is constant.
Then:
\begin{align*}
\vol &=\frac{1}{3(n+2)|\grad_g(f)|^4_g}\Bigg\{\int_M  |\Hess^{df}(f)|^2_g d\mu_g+\\
&\quad +\int_M \grad_g(f)(\Delta^{df}(f)) d\mu_g+ \int_M \Ric^{df}(\grad_g(f),\grad_g(f))d\mu_g\Bigg\}.
\end{align*}
\end{proposition}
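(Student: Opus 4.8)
The plan is to integrate over $M$ the second form of the Bochner formula displayed immediately before the statement, namely
\begin{align*}
\frac{1}{2}\Delta^g(|\grad_g(f)|^2_g) &= |\Hess^{df}(f)|^2_g + \Ric^{df}(\grad_g(f),\grad_g(f)) + \grad_g(f)(\Delta^{df}(f)) \\
&\quad - 3|\grad_g(f)|^2_g\,\Delta^{df}(f) - \frac{n+8}{2}\,\grad_g(f)(|\grad_g(f)|^2_g),
\end{align*}
and then to solve the resulting identity for $\vol$. This runs parallel to the proof of Proposition~\ref{p}, the only change being that here I start from the version of the Bochner formula expressed through $\Ric^{df}$ and $\Hess^{df}(f)$ instead of through $\Ric^g$ and $\Hess^g(f)$.

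First I would dispose of the terms that integrate to zero. Since $M$ is compact, the divergence theorem gives $\int_M \Delta^g(|\grad_g(f)|^2_g)\,d\mu_g = 0$; in fact, under the hypothesis that $|\grad_g(f)|_g$ is constant this integrand vanishes identically. The same constancy hypothesis forces $\grad_g(f)(|\grad_g(f)|^2_g)=0$ pointwise, so the last term disappears as well. Integrating the formula therefore leaves
\begin{align*}
0 &= \int_M |\Hess^{df}(f)|^2_g\,d\mu_g + \int_M \Ric^{df}(\grad_g(f),\grad_g(f))\,d\mu_g \\
&\quad + \int_M \grad_g(f)(\Delta^{df}(f))\,d\mu_g - 3\int_M |\grad_g(f)|^2_g\,\Delta^{df}(f)\,d\mu_g.
\end{align*}

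The one term that genuinely needs evaluating is the last integral. Using the identity $\Delta^{df}(f)=\Delta^{g}(f)+(n+2)|\grad_g(f)|_g^2$ recorded earlier and again the constancy of $|\grad_g(f)|_g$, I would pull the scalar factor $|\grad_g(f)|^2_g$ out of the integral and apply the divergence theorem to $\int_M \Delta^g(f)\,d\mu_g=0$, getting $\int_M \Delta^{df}(f)\,d\mu_g=(n+2)|\grad_g(f)|^2_g\,\vol$. Hence $3\int_M |\grad_g(f)|^2_g\,\Delta^{df}(f)\,d\mu_g = 3(n+2)|\grad_g(f)|^4_g\,\vol$, and transposing this to the left-hand side yields exactly the asserted formula.

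I do not expect a serious obstacle beyond careful bookkeeping: the whole computation rests on compactness (to annihilate integrals of Laplacians through the divergence theorem) and on the constancy of $|\grad_g(f)|_g$ (to discard the directional-derivative term and to extract the constant scalar factor from the remaining integral). The single point worth checking with care is that each integrand claimed to vanish is indeed either a total divergence or killed by the constancy hypothesis, so that no residual curvature contribution is overlooked.
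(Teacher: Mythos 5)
Your proposal is correct and follows essentially the same route as the paper: the proposition is stated there as an immediate consequence of the displayed Bochner identity written in terms of $\Ric^{df}$ and $\Hess^{df}(f)$, integrated over the compact manifold, with the constancy of $|\grad_g(f)|_g$ killing the directional-derivative terms and the divergence theorem giving $\int_M \Delta^{df}(f)\,d\mu_g=(n+2)|\grad_g(f)|^2_g\,\vol$. Your explicit evaluation of that last integral is exactly the step the paper leaves implicit, so nothing is missing.
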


\begin{remark}
Notice that, under the same hypotheses, we have:
$$\int_M  |\Hess^{g}(f)|^2_g d\mu_g+
\int_M \grad_g(f)(\Delta^{g}(f)) d\mu_g+
\int_M \Ric^{g}(\grad_g(f),\grad_g(f))d\mu_g=0.
$$
\end{remark}

\begin{proposition}
Let $(M,g)$ be a compact $n$-dimensional pseudo-Riemannian manifold, $f$ a smooth function on $M$ such that $|\grad_g(f)|_g$ is constant.
If
$(\nabla^g,Q^{df}, \grad_g(f),\lambda)$ is a gradient almost soliton, then:
$$\vol=\frac{1}{2(n-1)|\grad_g(f)|^4_g}\Bigg\{\int_M  |\Hess^{g}(f)|^2_g d\mu_g+
 |\grad_g(f)|^2_g \int_M \lambda d\mu_g+$$$$+ n\int_M \grad_g(f)(\lambda)d\mu_g-\int_M \grad_g(f)(\scal^{(g,\nabla^g)})d\mu_g\Bigg\}.$$
\end{proposition}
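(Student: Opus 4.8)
The plan is to start from the volume formula of Proposition \ref{p} and rewrite its right-hand side using the soliton hypothesis, keeping in mind throughout that $|\grad_g(f)|_g^2$ is constant, so that $\grad_g(f)(|\grad_g(f)|_g^2)=0$; this single fact is what will collapse the computation. By the definition of a gradient almost soliton, the hypothesis $(\nabla^g,Q^{df},\grad_g(f),\lambda)$ reads, as an identity of $(1,1)$-tensor fields,
$$\nabla^g\grad_g(f)+Q^{df}=\lambda I.$$
Since the two formulas share the term $\int_M|\Hess^g(f)|_g^2\,d\mu_g$ verbatim, it suffices to re-express the remaining two integrals.

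First I would evaluate $\int_M\langle\Ric^{df},df\otimes df\rangle_g\,d\mu_g$. Applying the soliton identity to $\grad_g(f)$ and pairing with $\grad_g(f)$ through $g$ gives
$$g(\nabla^g_{\grad_g(f)}\grad_g(f),\grad_g(f))+\Ric^{df}(\grad_g(f),\grad_g(f))=\lambda|\grad_g(f)|_g^2,$$
and the first summand equals $\tfrac12\grad_g(f)(|\grad_g(f)|_g^2)=0$. Recognizing that $\langle\Ric^{df},df\otimes df\rangle_g=\Ric^{df}(\grad_g(f),\grad_g(f))$, I obtain the pointwise value $\lambda|\grad_g(f)|_g^2$, hence $\int_M\langle\Ric^{df},df\otimes df\rangle_g\,d\mu_g=|\grad_g(f)|_g^2\int_M\lambda\,d\mu_g$, using once more the constancy of $|\grad_g(f)|_g^2$ to pull it out of the integral.

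Next I would treat $\int_M\grad_g(f)(\Delta^g(f))\,d\mu_g$. Taking the trace of the soliton identity, and using $\operatorname{tr}(\nabla^g\grad_g(f))=\Delta^g(f)$ together with $\operatorname{tr}(Q^{df})=\scal^{(g,\nabla^{df})}$, yields $\Delta^g(f)+\scal^{(g,\nabla^{df})}=n\lambda$. Combining this with the scalar-curvature relation $\scal^{(g,\nabla^{df})}=\scal^{(g,\nabla^g)}+(n-1)(n+2)|\grad_g(f)|_g^2$ established earlier gives the pointwise identity
$$\Delta^g(f)=n\lambda-\scal^{(g,\nabla^g)}-(n-1)(n+2)|\grad_g(f)|_g^2.$$
Differentiating along $\grad_g(f)$ and invoking $\grad_g(f)(|\grad_g(f)|_g^2)=0$ once more makes the final term disappear, so $\grad_g(f)(\Delta^g(f))=n\,\grad_g(f)(\lambda)-\grad_g(f)(\scal^{(g,\nabla^g)})$, and integrating over $M$ produces $\int_M\grad_g(f)(\Delta^g(f))\,d\mu_g=n\int_M\grad_g(f)(\lambda)\,d\mu_g-\int_M\grad_g(f)(\scal^{(g,\nabla^g)})\,d\mu_g$.

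Substituting these two evaluations back into the formula of Proposition \ref{p} yields exactly the asserted expression. The argument is in essence bookkeeping, and I expect the only genuinely delicate point to be the correct identification of the two traces, $\operatorname{tr}(\nabla^g\grad_g(f))=\Delta^g(f)$ and $\operatorname{tr}(Q^{df})=\scal^{(g,\nabla^{df})}$, in the pseudo-Riemannian setting, along with the disciplined use of the constancy of $|\grad_g(f)|_g^2$, which is precisely what annihilates the two terms that would otherwise obstruct the desired simplification.
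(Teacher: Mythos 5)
Your proof is correct and follows essentially the same route as the paper's: both plug into Proposition \ref{p}, derive the pointwise identity $\Ric^{df}(\grad_g(f),\grad_g(f))=\lambda|\grad_g(f)|^2_g$ from the soliton equation, and obtain $\Delta^g(f)+\scal^{(g,\nabla^g)}+(n-1)(n+2)|\grad_g(f)|^2_g=n\lambda$ by tracing (the paper traces after substituting the expression (\ref{e3}) for $\Ric^{df}$ into the $(0,2)$-form of the soliton equation, while you trace the $(1,1)$-form directly and then invoke the already-established relation $\scal^{(g,\nabla^{df})}=\scal^{(g,\nabla^g)}+(n-1)(n+2)|\grad_g(f)|^2_g$, which amounts to the same computation). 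The only cosmetic difference is this bookkeeping choice; the mathematical content is identical.
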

\begin{proof}
Indeed, $\nabla^g \grad_g(f)+Q^{df}=\lambda I$ implies
\begin{equation}\label{e4}
\Hess^{g}(f)+\Ric^{df}=\lambda g.
\end{equation} Then:
$$\langle \Ric^{df}, df\otimes df\rangle_g=\lambda |\grad_g(f)|^2_g-\frac{1}{2}\grad_g(f)(|\grad_g(f)|^2_g).$$
Also, replacing $\Ric^{df}$ from (\ref{e3}) in (\ref{e4}) and taking the trace with respect to $g$, we get:
$$\Delta^g(f)+\scal^{(g,\nabla^g)}+(n-1)(n+2)|\grad_g(f)|^2_g=n\lambda.$$
Now, by applying $\grad_g(f)$ to the previous relation and using Proposition \ref{p}, we obtain the conclusion.
\end{proof}

\begin{remark}
Under the same hypotheses, we have:

\vskip.06in
i) if $\lambda=2(n-1)|\grad_g(f)|^2_g$ and $n\geq 3$, then
$$\vol=\frac{1}{(n-1)(n-2)|\grad_g(f)|^2_g}\int_M \scal^{(g,\nabla^g)}d\mu_g.$$
Moreover, if $\scal^{(g,\nabla^g)}$ is constant, then
$$\scal^{(g,\nabla^g)}=(n-1)(n-2)|\grad_g(f)|^2_g\geq 0$$
provided $|\grad_g(f)|^2_g\geq 0$ and $f$ is a harmonic function for $\Delta^g$;

\vskip.06in
ii) if $\lambda$ is a constant and $\lambda \neq 2(n-1)|\grad_g(f)|^2_g$, then
$$\vol=\frac{1}{\{2(n-1)|\grad_g(f)|^2_g-\lambda\}|\grad_g(f)|^2_g}\Bigg\{\int_M |\Hess^{g}(f)|^2_g d\mu_g-$$
$$-\int_M \grad_g(f)(\scal^{(g,\nabla^g)})d\mu_g\Bigg\}.$$
Moreover, if $f$ is a harmonic function for $\Delta^g$, then
$$\vol=\frac{1}{\{2(n-1)|\grad_g(f)|^2_g-\lambda\}|\grad_g(f)|^2_g}\int_M |\Hess^{g}(f)|^2_g d\mu_g.$$
Hence, in the Riemannian case, $\lambda < 2(n-1)|\grad_g(f)|^2_g$, therefore $$\scal^{(g,\nabla^g)}<(n-1)(n-2)|\grad_g(f)|^2_g.$$
\end{remark}

\begin{proposition}
Let $(M,g)$ be an $n$-dimensional pseudo-Riemannian man\-i\-fold and $f$ a smooth function on $M$.
If $(\nabla^g,Q^{df}, \grad_g(f),\lambda)$ is a gradient almost soliton, then:
$$(n-1)^2 |\Hess^{g}(f)|^2_g + \frac{(n-1)(n-2)^2}{n}|\grad_g(f)|^4_g-\frac{(n-1)^2}{n}(\Delta^g(f))^2+$$$$+\frac{2(n-1)(n-2)}{n}|\grad_g(f)|^2_g\Delta^g(f)-2(n-1)(n-2)\Hess^g(f)(\grad_g(f),\grad_g(f))\leq
$$
$$\leq |\Ric^{g}|^2_g \leq $$
$$\leq
(n-1)^2 |\Hess^{g}(f)|^2_g - \frac{(n-1)(n-2)^2}{n}|\grad_g(f)|^4_g+\frac{1}{n}(\scal^{(g,\nabla^g)})^2+$$
$$+\frac{2(n-2)}{n}|\grad_g(f)|^2_g\scal^{(g,\nabla^g)}-
2(n-2)\Ric^g(\grad_g(f),\grad_g(f))
.$$
\end{proposition}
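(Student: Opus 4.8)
The plan is to convert the soliton hypothesis into a single pointwise algebraic identity for $\Ric^g$, and then read off both inequalities as an exact value plus (respectively minus) a manifestly nonnegative square. First I would rewrite the soliton equation. Saying that $(\nabla^g,Q^{df},\grad_g(f),\lambda)$ is a gradient almost soliton means $\nabla^g\grad_g(f)+Q^{df}=\lambda I$; taking the $g$-inner product with a second vector field turns this into $\Hess^g(f)+\Ric^{df}=\lambda g$. Substituting the expression (\ref{e3}) for $\Ric^{df}$ and solving for $\Ric^g$ produces the key identity
\[
\Ric^g=(n-1)\Hess^g(f)-(n-2)\,df\otimes df+\bigl\{\lambda-n|\grad_g(f)|^2_g-\Delta^g(f)\bigr\}g .
\]
To streamline the bookkeeping I would abbreviate the scalar coefficient of $g$ as $c:=\lambda-n|\grad_g(f)|^2_g-\Delta^g(f)$, so that $\Ric^g=(n-1)\Hess^g(f)-(n-2)\,df\otimes df+c\,g$.

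Next I would compute the three scalar quantities that feed the two bounds, all directly from this identity, using only the elementary $g$-inner-product values of the building blocks:
\[
\langle\Hess^g(f),df\otimes df\rangle_g=\Hess^g(f)(\grad_g(f),\grad_g(f)),\qquad \langle\Hess^g(f),g\rangle_g=\Delta^g(f),
\]
\[
\langle df\otimes df,df\otimes df\rangle_g=|\grad_g(f)|^4_g,\qquad \langle df\otimes df,g\rangle_g=|\grad_g(f)|^2_g,\qquad \langle g,g\rangle_g=n .
\]
Expanding $\langle\Ric^g,\Ric^g\rangle_g$ then expresses $|\Ric^g|^2_g$ as a polynomial in $|\Hess^g(f)|^2_g$, $\Hess^g(f)(\grad_g(f),\grad_g(f))$, $\Delta^g(f)$, $|\grad_g(f)|^2_g$, $|\grad_g(f)|^4_g$ and $c$. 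Tracing the key identity gives $\scal^{(g,\nabla^g)}=(n-1)\Delta^g(f)-(n-2)|\grad_g(f)|^2_g+nc$, and evaluating it on $(\grad_g(f),\grad_g(f))$ gives $\Ric^g(\grad_g(f),\grad_g(f))=(n-1)\Hess^g(f)(\grad_g(f),\grad_g(f))-(n-2)|\grad_g(f)|^4_g+c|\grad_g(f)|^2_g$.

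Finally I would verify the two exact identities
\[
|\Ric^g|^2_g=L+\tfrac{1}{n}\bigl(\scal^{(g,\nabla^g)}\bigr)^2,\qquad |\Ric^g|^2_g=U-\tfrac{(n-1)^2}{n}\bigl(\Delta^g(f)\bigr)^2,
\]
where $L$ and $U$ denote the claimed lower and upper bounds. Each is purely computational: one substitutes the formulas just obtained for $\scal^{(g,\nabla^g)}$, $\Ric^g(\grad_g(f),\grad_g(f))$ and $c$, and checks that every monomial matches — the conceptual content being that the whole $\lambda$-dependence (entering through $c$) repackages precisely into $\tfrac1n(\scal^{(g,\nabla^g)})^2$ for the lower estimate and cancels completely, leaving only the residual $\tfrac{(n-1)^2}{n}(\Delta^g(f))^2$, for the upper one. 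Since $(\scal^{(g,\nabla^g)})^2\ge 0$ and $(\Delta^g(f))^2\ge 0$ as squares of real-valued functions, the two inequalities follow immediately, and, notably, with no restriction on the signature of $g$. The only real obstacle is the algebraic bookkeeping of the expansion; for the lower bound one can shorten it by noting that $L$ is exactly the squared norm $|\Ric^g-\tfrac1n\scal^{(g,\nabla^g)}g|^2_g$ of the trace-free Ricci tensor, via the orthogonal splitting $|\Ric^g|^2_g=|\Ric^g-\tfrac1n\scal^{(g,\nabla^g)}g|^2_g+\tfrac1n(\scal^{(g,\nabla^g)})^2$, since only the trace-free parts of $\Hess^g(f)$ and $df\otimes df$ survive in $\Ric^g-\tfrac1n\scal^{(g,\nabla^g)}g$.
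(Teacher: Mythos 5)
Your proposal is correct, and it rests on the same foundation as the paper's proof: rewriting the soliton equation via (\ref{e3}) as $\Ric^{g}=(n-1)\Hess^{g}(f)-(n-2)\,df\otimes df+\{\lambda-n|\grad_g(f)|^2_g-\Delta^g(f)\}g$ and then squaring. Where you diverge is in how the inequalities are extracted. The paper treats the two squared-norm relations as quadratic equations in $\lambda$ and invokes the solvability (discriminant $\geq 0$) condition, leaving the discriminants unidentified. You instead eliminate $\lambda$ entirely and exhibit the exact identities $|\Ric^g|^2_g=L+\tfrac1n(\scal^{(g,\nabla^g)})^2$ and $|\Ric^g|^2_g=U-\tfrac{(n-1)^2}{n}(\Delta^g(f))^2$; these are precisely the discriminant conditions made explicit, since for a quadratic $a\lambda^2+b\lambda+d=0$ with the known real root $\lambda$ one has $b^2-4ad=(2a\lambda+b)^2$, and here $2a\lambda+b$ evaluates (via the traced identity) to $2\scal^{(g,\nabla^g)}$ for the lower bound and $-2(n-1)\Delta^g(f)$ for the upper one. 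Your version buys three things the paper's terse argument does not state: the defect in each inequality is an explicit geometric square, so the equality cases are characterized pointwise ($\scal^{(g,\nabla^g)}=0$, respectively $\Delta^g(f)=0$); the signature-independence is transparent, since only squares of real scalar functions are used (this matters, as $|\cdot|^2_g$ of tensors need not be nonnegative in the pseudo-Riemannian setting); and the lower bound acquires a conceptual meaning as $|\Ric^g-\tfrac1n\scal^{(g,\nabla^g)}g|^2_g$, the squared norm of the trace-free Ricci tensor, via the orthogonal trace decomposition. The cost is only the longer bookkeeping, which your orthogonal-splitting remark largely absorbs for the lower estimate.
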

\begin{proof} This proposition follows by
computing $|\Hess^{g}(f)|^2_g$ from
$$(1-n)\Hess^{g}(f)=\{\lambda-n|\grad_g(f)|^2_g-\Delta^g(f)\}g-(n-2)df\otimes df-\Ric^{g}$$
and $|\Ric^{g}|^2_g$ from
$$\Ric^{g}=\{\lambda-n|\grad_g(f)|^2_g-\Delta^g(f)\}g-(n-2)df\otimes df+(n-1)\Hess^{g}(f)$$
as well as the fact that the conditions to exist a solution (in $\lambda$) give precisely the double inequality from the conclusion.
\end{proof}

\begin{proposition}
Let $(M,g)$ be a compact $n$-dimensional pseudo-Riemann\-ian manifold, $f$ a smooth function on $M$ such that $|\grad_g(f)|_g$ is constant.
If
$(\nabla^{df},Q^{g}, \grad_g(f),\lambda)$ is a gradient soliton, then:
\begin{align*}& \vol=\frac{1}{(\lambda-3|\grad_g(f)|^2_g)|\grad_g(f)|^2_g}\Bigg\{-\int_M  |\Hess^{g}(f)|^2_g d\mu_g+
\\ &\hskip.3in  +\int_M \grad_g(f)(\scal^{(g,\nabla^g)})d\mu_g\Bigg\},
\end{align*}
provided $\lambda\neq 3|\grad_g(f)|^2_g$.
\end{proposition}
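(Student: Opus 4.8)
The plan is to convert the $\nabla^{df}$-soliton into an identity among Levi-Civita quantities and then play it off against the integrated Bochner formula. First I would pair the soliton equation $\nabla^{df}\grad_g(f)+Q^{g}=\lambda I$ with $g$ to rewrite it as the $(0,2)$-tensor identity $\Hess^{df}(f)+\Ric^{g}=\lambda g$, using the definition $\Hess^{df}(f)(X,Y)=g(\nabla^{df}_X\grad_g(f),Y)$ and $g(Q^{g}X,Y)=\Ric^{g}(X,Y)$. Substituting the relation (\ref{e5}) for $\Hess^{df}(f)$ then produces the purely Levi-Civita equation
$$\Hess^{g}(f)+\Ric^{g}+2\,df\otimes df=\bigl(\lambda-|\grad_g(f)|^2_g\bigr)g,$$
which is the starting point for everything that follows.

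Next I would read off the two scalar consequences that the volume formula needs. Evaluating this identity on the pair $(\grad_g(f),\grad_g(f))$ and using that $|\grad_g(f)|_g$ is constant — so that $\Hess^{g}(f)(\grad_g(f),\grad_g(f))=\tfrac12\grad_g(f)\bigl(|\grad_g(f)|^2_g\bigr)=0$ while $df(\grad_g(f))=|\grad_g(f)|^2_g$ — yields
$$\Ric^{g}(\grad_g(f),\grad_g(f))=\bigl(\lambda-3|\grad_g(f)|^2_g\bigr)|\grad_g(f)|^2_g.$$
Taking instead the $g$-trace of the same identity gives $\Delta^{g}(f)=n\lambda-(n+2)|\grad_g(f)|^2_g-\scal^{(g,\nabla^{g})}$; applying $\grad_g(f)$ and using that $\lambda$ and $|\grad_g(f)|^2_g$ are both constant collapses this to $\grad_g(f)(\Delta^{g}(f))=-\grad_g(f)\bigl(\scal^{(g,\nabla^{g})}\bigr)$.

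Finally I would invoke the integrated Bochner identity already recorded above, namely
$$\int_M|\Hess^{g}(f)|^2_g\,d\mu_g+\int_M\grad_g(f)(\Delta^{g}(f))\,d\mu_g+\int_M\Ric^{g}(\grad_g(f),\grad_g(f))\,d\mu_g=0,$$
which holds precisely under the present compactness and constant-norm hypotheses. Substituting the two scalar relations just obtained — so that the last integral becomes $\bigl(\lambda-3|\grad_g(f)|^2_g\bigr)|\grad_g(f)|^2_g\,\vol$ and the middle one becomes $-\int_M\grad_g(f)(\scal^{(g,\nabla^{g})})\,d\mu_g$ — turns the identity into a single linear equation for $\vol$ whose solution is exactly the asserted formula, valid as soon as $\lambda\neq 3|\grad_g(f)|^2_g$. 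I do not expect a genuine obstacle here: the computation is bookkeeping, and the only delicate point is to deploy the two constancy hypotheses in the right places. The vanishing of $\Hess^{g}(f)(\grad_g(f),\grad_g(f))$ and the replacement of $\grad_g(f)(\Delta^{g}(f))$ by $-\grad_g(f)(\scal^{(g,\nabla^{g})})$ are exactly what make the integrated Bochner identity solvable for $\vol$, and the numerical coefficient $3$ must be tracked carefully through the contraction, since an arithmetic slip there would propagate into the final denominator.
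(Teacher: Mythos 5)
Your proof is correct, and it reaches the formula by a somewhat cleaner route than the paper's. The paper keeps the connection $\nabla^{df}$ in play to the end: it expands $\langle \Ric^{df}, df\otimes df\rangle_g$ using (\ref{e3}) and then invokes Proposition \ref{p}, whose volume formula is written in terms of $\int_M\langle \Ric^{df}, df\otimes df\rangle_g\,d\mu_g$; if one unwinds that computation (the terms $2(n-1)|\grad_g(f)|^4_g\operatorname{vol}(M)$ cancel after using $\int_M\Delta^g(f)\,d\mu_g=0$), what remains is precisely the classical integrated Bochner identity you cite directly, namely the paper's Remark
$$\int_M |\Hess^{g}(f)|^2_g\, d\mu_g+\int_M \grad_g(f)(\Delta^{g}(f))\, d\mu_g+\int_M \Ric^{g}(\grad_g(f),\grad_g(f))\,d\mu_g=0.$$
You instead eliminate $\nabla^{df}$ at the outset via (\ref{e5}), evaluate the resulting Levi-Civita identity on $(\grad_g(f),\grad_g(f))$ to obtain $\Ric^{g}(\grad_g(f),\grad_g(f))=(\lambda-3|\grad_g(f)|^2_g)|\grad_g(f)|^2_g$, and combine this with the same trace identity $\Delta^g(f)+\scal^{(g,\nabla^g)}+(n+2)|\grad_g(f)|^2_g=n\lambda$ and the substitution $\grad_g(f)(\Delta^g(f))=-\grad_g(f)(\scal^{(g,\nabla^g)})$. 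The two arguments share the same backbone (soliton equation, trace, Bochner), but yours buys economy and transparency: it never needs (\ref{e3}) or the $\Ric^{df}$ scalar-product computation, and it makes explicit the evaluation at $(\grad_g(f),\grad_g(f))$ --- the actual source of the coefficient $3$ in the denominator --- which the paper's terse proof leaves implicit. What the paper's route buys in exchange is uniformity: Proposition \ref{p} serves as its general-purpose volume formula, so each soliton case in that section reduces to computing one scalar product against it.
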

\begin{proof}
Indeed, $\nabla^{df} \grad_g(f)+Q^{g}=\lambda I$ implies
\begin{equation}\label{e6}
\Hess^{df}(f)+\Ric^{g}=\lambda g.
\end{equation} Then:
\begin{align*}& \langle \Ric^{df}, df\otimes df\rangle_g=\Ric^{g}(\grad_g(f),\grad_g(f))+
\\ &\hskip.3in  +\{n|\grad_g(f)|^2_g+\Delta^g(f)\}|\grad_g(f)|^2_g+(n-2)|\grad_g(f)|^4_g.
\end{align*}
Also, replacing $\Hess^{df}(f)$ from (\ref{e5}) in (\ref{e6}) and taking the trace with respect to $g$, we get:
$$\Delta^g(f)+\scal^{(g,\nabla^g)}+(n+2)|\grad_g(f)|^2_g=n\lambda.$$
Now, by applying $\grad_g(f)$ to the previous relation and using Proposition \ref{p}, we obtain the conclusion.
\end{proof}

\begin{proposition}
Let $(M,g)$ be an $n$-dimensional pseudo-Riemannian man\-i\-fold and $f$ a smooth function on $M$.
If $(\nabla^{df},Q^{g}, \grad_g(f),\lambda)$ is a gradient almost soliton, then:
$$|\Hess^{g}(f)|^2_g + \frac{4(n-1)}{n}|\grad_g(f)|^4_g-\frac{1}{n}(\Delta^g(f))^2-\frac{4}{n}|\grad_g(f)|^2_g\Delta^g(f)+$$$$+4\Hess^g(f)(\grad_g(f),\grad_g(f))\leq$$$$ \leq |\Ric^{g}|^2_g \leq $$$$\leq
|\Hess^{g}(f)|^2_g - \frac{4(n-1)}{n}|\grad_g(f)|^4_g+\frac{1}{n}(\scal^{(g,\nabla^g)})^2+\frac{4}{n}|\grad_g(f)|^2_g\scal^{(g,\nabla^g)}-$$$$-4\Ric^g(\grad_g(f),\grad_g(f))
.$$
\end{proposition}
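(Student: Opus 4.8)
The plan is to imitate the preceding proposition, whose hypothesis is the mirror image of this one: there the soliton combined $\nabla^g\grad_g(f)$ with the twisted operator $Q^{df}$, whereas here it combines $\nabla^{df}\grad_g(f)$ with the Levi-Civita operator $Q^g$. First I would rewrite the soliton hypothesis $\nabla^{df}\grad_g(f)+Q^g=\lambda I$ in tensorial form, which is precisely equation (\ref{e6}), namely $\Hess^{df}(f)+\Ric^g=\lambda g$. Feeding in the expansion (\ref{e5}) for $\Hess^{df}(f)$ collapses this to the single algebraic identity
$$\Ric^{g}=\{\lambda-|\grad_g(f)|_g^2\}g-2\,df\otimes df-\Hess^{g}(f),$$
which may equally be solved for $\Hess^{g}(f)$.

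Next I would take the squared $g$-norm of this identity in the two available ways. Squaring the form displayed above (solved for $\Ric^{g}$) and using the contractions $\langle g,g\rangle_g=n$, $\langle g,df\otimes df\rangle_g=|\grad_g(f)|_g^2$, $\langle g,\Hess^{g}(f)\rangle_g=\Delta^g(f)$, $\langle df\otimes df,\Hess^{g}(f)\rangle_g=\Hess^{g}(f)(\grad_g(f),\grad_g(f))$ and $\langle df\otimes df,df\otimes df\rangle_g=|\grad_g(f)|_g^4$, I obtain $|\Ric^{g}|_g^2$ as a quadratic polynomial in $\lambda$ with leading coefficient $n$, whose remaining coefficients carry $|\Hess^{g}(f)|_g^2$, $\Delta^g(f)$ and $\Hess^{g}(f)(\grad_g(f),\grad_g(f))$. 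Squaring instead the form solved for $\Hess^{g}(f)$ and contracting with $g$ and with $df\otimes df$ produces a second quadratic in $\lambda$, this time with $\scal^{(g,\nabla^g)}$ and $\Ric^{g}(\grad_g(f),\grad_g(f))$ in place of $\Delta^g(f)$ and $\Hess^{g}(f)(\grad_g(f),\grad_g(f))$.

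The decisive observation is that each relation is a quadratic $n\lambda^2+b\lambda+c=0$ in the real unknown $\lambda$; since the soliton is assumed to exist, $\lambda$ is a genuine real root, so each discriminant $b^2-4nc$ must be nonnegative. In the first quadratic the constant term contains $-|\Ric^{g}|_g^2$, so its discriminant condition rearranges into a lower bound for $|\Ric^{g}|_g^2$; in the second the constant term contains $+|\Ric^{g}|_g^2$, so its discriminant condition rearranges into an upper bound. Simplifying both, and using the identity $4|\grad_g(f)|_g^4-\tfrac{4}{n}|\grad_g(f)|_g^4=\tfrac{4(n-1)}{n}|\grad_g(f)|_g^4$, yields exactly the stated double inequality.

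I expect the only delicate part to be the bookkeeping: keeping the two norm computations separate so that the terms in $\Delta^g(f)$ and $\Hess^{g}(f)(\grad_g(f),\grad_g(f))$ land in the lower bound while the terms in $\scal^{(g,\nabla^g)}$ and $\Ric^{g}(\grad_g(f),\grad_g(f))$ land in the upper bound, and verifying that each discriminant inequality points in the claimed direction rather than its reverse. There is no conceptual obstacle: once the two quadratics in $\lambda$ are in hand the remainder is purely algebraic, exactly as in the preceding proposition.
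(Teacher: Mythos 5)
Your proposal is correct and follows essentially the same route as the paper's own proof: rewrite the soliton equation via (\ref{e5})--(\ref{e6}) as $\Ric^{g}=(\lambda-|\grad_g(f)|^2_g)g-2df\otimes df-\Hess^{g}(f)$, square it once solved for $\Ric^{g}$ and once solved for $\Hess^{g}(f)$, and impose nonnegativity of the discriminant of each resulting quadratic in $\lambda$. Your bookkeeping is also right --- the $\Delta^g(f)$ and $\Hess^g(f)(\grad_g(f),\grad_g(f))$ terms produce the lower bound and the $\scal^{(g,\nabla^g)}$ and $\Ric^g(\grad_g(f),\grad_g(f))$ terms the upper bound --- matching the paper's (terser) argument exactly.
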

\begin{proof} This proposition follows by
computing $|\Hess^{g}(f)|^2_g$ from
$$\Hess^{g}(f)=(\lambda-|\grad_g(f)|^2_g)g-2df\otimes df-\Ric^{g}$$
and $|\Ric^{g}|^2_g$ from
$$\Ric^{g}=(\lambda-|\grad_g(f)|^2_g)g-2df\otimes df-\Hess^{g}(f)$$
as well as the fact that the conditions to exist a solution (in $\lambda$) give precisely the double inequality from the conclusion.
\end{proof}

\begin{remark}
If $|\grad_g(f)|_g=1$, then the double equality from the previous Proposition implies
$$\Ric^g(\grad_g(f),\grad_g(f))=\frac{1}{4n}\{(\scal^{(g,\nabla^g)}+2)^2+(\Delta^g(f)+2)^2-8n\}.$$
In this case, from the soliton equation (\ref{e6}), we get
$$\Ric^g(\grad_g(f),\grad_g(f))=\lambda-3$$
and
$$\Delta^g(f)=n\lambda-(n+2)-\scal^{(g,\nabla^g)}.$$
Replacing the last two expressions in the first one and asking for the equation of order two in $\lambda$ to have solution, we get $n^2(\scal^{(g,\nabla^g)})^2\leq 0$, hence the manifold is of zero scalar curvature. Moreover, if $f$ is a harmonic function for $\Delta^g$, then $\lambda=\frac{n+2}{n}$ and $\Ric^g(\grad_g(f),\grad_g(f))=-\frac{2(n-1)}{n}<0$.

Note that Petersen {\rm\cite{pe}} called $f$ a \textit{distance function} if it is a solution of the Hamilton-Jacobi equation $|\grad_g(f)|^2_g=1$, which he has used in his book.
\end{remark}


\bigskip

\textit{Adara M. Blaga}

\textit{Department of Mathematics}

\textit{West University of Timi\c{s}oara}

\textit{Timi\c{s}oara, Rom\^{a}nia}

\textit{adarablaga@yahoo.com}


\bigskip

\textit{Bang-Yen Chen}

\textit{Department of Mathematics}

\textit{Michigan State University}

\textit{East Lansing, MI, USA}

\textit{chenb@msu.edu}

\end{document}